\def\bmm{\boldsymbol}
\def\mad{\textrm{mad}}
\def\C{\mathcal{C}}
\newtheorem{prop}{Proposition}
\newtheorem{thm}{Theorem}
\newtheorem*{thmA}{Theorem A}
\newtheorem*{thmHS}{Hajnal--Szemer\'{e}di Theorem}
\newtheorem*{mad-lem}{Mad Lemma}
\newtheorem{conj}{Conjecture}
\newtheorem{ques}{Question}
\theoremstyle{definition}
\newtheorem{defn}{Definition}
\newtheorem{clm}{Claim}[section]
\newtheorem{example}{Example}
\newtheorem*{remark}{Remark}
\newcommand\exampleEnd{\hfill\hbox{$\diamondsuit$}}
\newcommand\bolder[1]{\contour{black}{#1}}
\newenvironment{clmproof}[1]{\par\noindent\underline{Proof.}\space#1}{\leavevmode\unskip\penalty9999\hbox{}\nobreak\hfill\quad\hbox{$\diamondsuit$}\smallskip}
\renewcommand{\ge}{\geqslant}
\renewcommand{\le}{\leqslant}
\def\erdos{Erd\H{o}s}
\def\nesetril{Ne\v{s}et\v{r}il}
\def\tok{\textrm{tokens}}
\def\prim{\textrm{primary}}
\def\Big{\textsc{Big}}
\def\Basic{\textsc{Basic}}
\def\NonBasic{\textsc{NonBasic}}
\newcommand{\aside}[1]{\marginnote{\scriptsize{#1}}[0cm]}
\newcommand{\aaside}[2]{\marginnote{\scriptsize{#1}}[#2]}
\newcommand\Emph[1]{\emph{#1}\aside{#1}}
\newcommand\EmphE[2]{\emph{#1}\aaside{#1}{#2}}
\author{
Daniel W. Cranston\thanks{%
Department of Computer Science, Virginia Commonwealth
University, Richmond, VA, USA;
\texttt{dcranston@vcu.edu}
}
\and
Gexin Yu\thanks{%
Department of Mathematics, William \& Mary, Williamsburg, VA,
USA; \texttt{gyu@wm.edu}
}
}
\begin{document}
\title{Cliques in Squares of Graphs with\\ Maximum Average Degree less than 4}
\maketitle
\abstract{
Hocquard, Kim, and Pierron constructed, for every even integer $D\ge 2$, a
2-degenerate graph $G_D$ with maximum degree $D$ such that
$\omega(G_D^2)=\frac52D$.  
We prove
for (a) all 2-degenerate graphs $G$ and (b) all graphs $G$ with $\mad(G)<4$,
upper bounds on the clique number $\omega(G^2)$ of $G^2$ that
match the lower bound given by this construction, up to small additive
constants.  We show that if $G$ is 2-degenerate
with maximum degree $D$, then $\omega(G^2)\le \frac52D+72$ (with
$\omega(G^2)\le \frac52D+60$ when $D$ is sufficiently large).  And if $G$ has
$\mad(G)<4$ and maximum degree $D$, then $\omega(G^2)\le \frac52D+532$.
Thus, the construction of Hocquard et al.~is essentially best possible.
Our proofs introduce a ``token passing'' technique to derive crucial information 
about non-adjacencies in $G$ of vertices that are adjacent in $G^2$.  
This is a powerful technique for working with such graphs that has not 
previously appeared in the literature.
}

\section{Introduction}
\label{intro-sec}

The \Emph{square, $G^2$}, of a graph $G$ is formed from $G$ by adding an edge between
each pair of vertices at distance 2 in $G$.  A graph $G$ is \emph{$k$-degenerate} if
each subgraph of $G$ has a vertex of degree at most $k$.  (Equivalently, we require that 
$G$ has a vertex order $\sigma$ such that each vertex has at
most $k$ neighbors later in $\sigma$.)
And the \emph{maximum
average degree} of $G$, denoted \Emph{$\mad(G)$} is defined as
$\mad(G):=\max_{H\subseteq G}2|E(H)|/|V(H)|$.  In this paper we study the
maximum size of a clique in $G^2$, denoted $\omega(G^2)$, over all
2-degenerate graphs $G$ and, more generally, over all graphs $G$ with
$\mad(G)<4$.  Hocquard, Kim, and Pierron~\cite{HKP} constructed 2-degenerate
graphs $G_D$ with maximum degree $D$ such that $\omega(G_D^2)=5D/2$, for every
positive even integer $D$.  This construction gives a lower bound on the
maximums that we seek.  Our two main results prove that this lower bound is
sharp, up to small additive constants.

\begin{thm}
\label{main1}
Fix a positive integer $D$.  If a graph $G$ is 2-degenerate with $\Delta(G)\le D$,
then $\omega(G^2)\le \frac52D+72$.  Furthermore, if $D\ge 1729$,
then $\omega(G^2)\le \frac52D+60$.\footnote{The remark just before Section~\ref{nice-sec} 
details subsequent improvement of this result to a version that is sharp~\cite{KL}.}
\end{thm}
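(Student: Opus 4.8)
The plan is to bound the size of a maximum clique $K$ in $G^2$ directly, using a $2$-degeneracy vertex order to control how the distance-$2$ adjacencies inside $K$ are realized in $G$. Fix such an order, so that every vertex has at most two \emph{up-neighbors} (neighbors appearing later). For each pair $u,v\in K$ that is nonadjacent in $G$, fix a common neighbor (a \emph{connector}); let $C$ be the set of connectors and put $W:=K\cup C$, which is again $2$-degenerate. The dichotomy driving everything is the position of a connector $c$ of a pair $u,v$ relative to that pair. If $c$ lies after both $u$ and $v$, then $u,v$ are merely down-neighbors of $c$, so a single such \emph{high} connector can witness up to $\binom{D}{2}$ pairs. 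But if $c$ lies before both $u$ and $v$, then $u$ and $v$ are two of the at most two up-neighbors of $c$, so such a \emph{low} connector witnesses at most one pair. Thus the efficiency of a connector is governed entirely by its position, and this is precisely the kind of non-adjacency information I would extract by token passing.

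First I would extract a skeleton from the earliest clique vertex. Let $r$ be the earliest vertex of $K$; every other vertex of $K$ is later, and $r$ reaches each of them either through one of its at most two up-neighbors or through a connector lying before $r$ (necessarily a low connector, as $r$ is earliest in $K$). Each up-neighbor $w$ of $r$ contributes at most $|N_G(w)\cap K|\le D$ clique vertices, while each low connector before $r$ contributes exactly one new clique vertex and consumes one of $r$'s at most $D$ down-edges. This already yields the crude estimate $\omega(G^2)\le 3D+O(1)$: two full up-neighborhoods give $2D$, and up to $D$ low connectors each give a single vertex. The entire difficulty is to replace that last $D$ by $\tfrac12 D$, which is exactly the gap between this trivial count and the truth $\tfrac52D$.

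The heart of the argument, and where I expect the real work, is a global token-passing scheme showing that the low-connector contribution cannot approach $D$ once the two up-neighborhoods are already large. The mechanism is a competition for degree: a clique vertex $v$ reached from $r$ only through a low connector lies outside both up-neighborhoods, yet $v$ must \emph{itself} lie within distance $2$ of all $\approx 2D$ vertices of those up-neighborhoods while having degree at most $D$ in $G$. I would formalize this by giving each clique vertex a fixed supply of tokens and passing them along its at-most-two up-edges and to the connectors certifying its forward adjacencies; since every vertex emits along only two up-edges and a low connector can receive on behalf of a single pair, summing tokens over $W$ and comparing with the $2$-degeneracy bound $|E(W)|\le 2|W|$ should drive the number of such backward-only vertices down to $\tfrac12 D+O(1)$. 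Together with the $2D$ from the two up-neighborhoods this gives $\omega(G^2)\le\tfrac52D+O(1)$.

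The main obstacle is making this cross-interaction rigorous. A single clique vertex may be reached through a mixture of low connectors, mid-range connectors (lying between its endpoints), and the two up-neighborhoods, and the two up-neighborhoods may overlap with each other and with the backward-reached set; any honest bound must rule out the worst overlap pattern. Pinning this down --- essentially, showing that saturating the low-connector contribution necessarily shrinks the up-neighborhood contribution by the same amount --- is what the token accounting must accomplish, and it is where the constant $\tfrac52$ (rather than $2$ or $3$) is finally forced. I expect the additive slack, together with the threshold $D\ge 1729$ needed for the sharper bound $\tfrac52D+60$, to arise from a bounded number of exceptional ``heavy'' connectors and from rounding in the token inequalities; these boundary cases should be routine once the main competition is established.
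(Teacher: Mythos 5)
Your setup (the 2-degeneracy order, the low/high connector dichotomy, the observation that a low connector witnesses only one pair, and the resulting crude bound of roughly $3D$) matches the paper's starting point, but the heart of your plan rests on a quantitative claim that is false. Test it against the extremal construction (Example~\ref{example1}) with its natural 2-degeneracy order: the connectors $z_{xy}$ first, then the clique $S$, then the five hub vertices. The earliest clique vertex $r$ is adjacent to two hubs $a,b$; the clique vertices inside $N(a)\cup N(b)$ are those on the seven $K_5$-edges meeting $a$ or $b$, i.e.\ $\tfrac74D$ of them (the two up-neighborhoods overlap in the $D/4$ clique vertices on the edge $ab$), while the clique vertices reachable from $r$ only through low connectors are those on the three edges $cd,ce,de$, i.e.\ $\tfrac34D$ of them. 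So your key lemma ``backward-only $\le\tfrac12D+O(1)$'' is simply false, and the split $2D+\tfrac12D$ cannot be established term by term: the only true statement is the joint inequality (up-neighborhood contribution) $+$ (backward-only contribution) $\le\tfrac52D+O(1)$, which is the theorem itself. Your last paragraph concedes that this trade-off is the real issue, but offers no mechanism that could prove it.

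The mechanism you do propose --- summing tokens over $W=K\cup C$ and comparing with $|E(W)|\le 2|W|$ --- cannot close this gap, because $|C|$ is not small: in Example~\ref{example1} there are $\Theta(D^2)$ low connectors, so $2|W|$ is of order $D^2$ and the comparison yields nothing below the trivial $3D$; per-vertex counts of the form $2D+d^-(u)$, however summed, never see the interaction between the up-neighborhoods of \emph{different} clique vertices. The paper obtains $\tfrac52$ from a global structural theorem that your plan lacks entirely: token passing is used only to reduce, at a cost of an additive constant, to the ``nice'' case where $S$ is independent and consecutive in a 2-degeneracy order; then contracting each clique vertex onto its pair of up-neighbors turns $S$ into the edge set of a multigraph $H$ in which every edge shares an endpoint with all but at most $D-2$ others, and Theorem~\ref{thm1} bounds $|E(H)|\le\tfrac52D$. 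The trade-off you need is exactly Claim~\ref{lem3} there: for the edge $w_1w_2$ coming from $r$ and any disjoint edge $xy$ (a backward-only vertex), the multiplicities between $\{w_1,w_2\}$ and $\{x,y\}$ must sum to at least $\tfrac12D+2$, so disjoint edges necessarily consume the degrees of $w_1$ and $w_2$; iterating this pairwise forcing over the whole multigraph (the analysis of $\overline{H_0}$ in Claims~\ref{lem5}--\ref{lem9}) is what pins the constant at $\tfrac52$, with $K_5$ of uniform multiplicity $D/4$ as the extremal case. Without this global analysis, or a genuine replacement for it, your argument stops at $3D+O(1)$.
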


\begin{thm}
\label{main2}
Fix a positive integer $D$.  If a graph $G$ has $\mad(G)<4$ and $\Delta(G)\le D$,
then $\omega(G^2)\le \frac52D+532$.
\end{thm}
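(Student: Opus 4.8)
The plan is to reduce the whole problem to the analysis of a single maximum clique and then to borrow the token-passing machinery behind Theorem~\ref{main1}, paying for the weaker hypothesis with a larger additive constant. Fix $G$ with $\mad(G)<4$ and $\Delta(G)\le D$, and let $K$ be a clique in $G^2$ with $|K|=\omega(G^2)$. First I would throw away everything irrelevant to $K$: retain the vertices of $K$ and, for each pair of $K$-vertices lying at distance $2$ in $G$, one chosen common neighbor, which I call a \emph{connector}. Writing $H$ for the subgraph of $G$ induced by $K$ together with all connectors, we have $H\subseteq G$, so $\Delta(H)\le D$ and $\mad(H)<4$, and it suffices to bound $|K|$ using only the structure of $H$.

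The key observation is that $\mad(H)<4$ is the natural \emph{cyclic} relaxation of $2$-degeneracy. Since $\mad(H)<4$ forces $|E(H')|\le 2|V(H')|-1$ for every $H'\subseteq H$, the graph $H$ admits an orientation with maximum out-degree at most $2$; and $H$ is $2$-degenerate precisely when such an orientation can be chosen to be \emph{acyclic}. Thus the only obstruction to re-running the Theorem~\ref{main1} argument verbatim is the presence of directed cycles, i.e.\ of dense cores---copies of $K_4$ and the like---in which every vertex has degree at least $3$. I would peel $H$ by repeatedly deleting a vertex of degree at most $2$ and, whenever I reach a subgraph of minimum degree $3$, perform local surgery on a core: delete a few edges to break it while checking, using the remaining edges, that every affected pair of $K$ still has a common neighbor and so remains adjacent in the square. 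Whatever pairs cannot be rescued this way force the deletion of a small number of vertices from $K$ itself.

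Having straightened $H$ into a $2$-degenerate graph $H'$ on which $K$ survives as an (almost as large) clique in $(H')^2$, I would invoke the token-passing count of Theorem~\ref{main1}: assign tokens recording the non-adjacencies in $H'$ between pairs of $K$ that are nonetheless adjacent in the square, pass them along the degeneracy order, and bound their total to conclude that the $2$-degenerate remainder contributes at most $\tfrac52 D+O(1)$ to $|K|$. The role of the Mad Lemma is to bound the cumulative cost of all the surgeries---both the $K$-vertices that had to be dropped and the tokens disturbed at each broken core---by an absolute constant, independent of $D$; this is exactly what turns the additive $72$ of Theorem~\ref{main1} into the $532$ claimed here.

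I expect the main obstacle to be controlling this cumulative cost, i.e.\ the loss of acyclicity itself. In the $2$-degenerate setting every token flows strictly downhill in the order and the bookkeeping closes cleanly, whereas under $\mad(H)<4$ a token can in principle circulate around a directed cycle. The delicate point is to prove---via the inequality $|E(H')|\le 2|V(H')|-1$ applied to the union of the dense cores meeting $N[K]$---that the cumulative cost of the surgeries (the dropped $K$-vertices together with the token surplus created at each broken core) is bounded by a genuine constant rather than by a function of $D$. Extracting an honest, small such bound (as opposed to merely ``some constant'') is what fixes the value $532$ and is the technically hardest part of the argument.
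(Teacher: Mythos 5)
Your high-level plan coincides with the paper's: reduce to the $2$-degenerate case at the cost of an additive constant and then quote Theorem~\ref{main1}, so that $532=72+460$. The gap is in the mechanism you propose for that reduction. You assert that under $\mad(H)<4$ the only obstruction to $2$-degeneracy consists of ``dense cores---copies of $K_4$ and the like'' that can each be broken by deleting ``a few edges,'' with total cost bounded by an absolute constant via the inequality $|E(H')|\le 2|V(H')|-1$ applied to the union of the cores. This is not true: a subgraph of minimum degree $3$ in a graph of maximum average degree less than $4$ need be neither small nor local. A disjoint union of $t$ copies of $K_4$ (or any $3$-regular graph) has $\mad\le 3<4$, yet making it $2$-degenerate requires at least $t$ edge deletions, one per copy; nothing in your argument prevents the graph $H$ built from $K$ and its connectors from containing $\Omega(D)$ pairwise disjoint such cores. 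Consequently the number of surgeries---and with it the number of clique vertices you may be forced to sacrifice, at least one per unrescuable core in the worst case---is not bounded by any constant your tools can produce: the mad inequality applied to a core only says that core has average degree below $4$, not that cores are few or small. You flag this as ``the technically hardest part,'' but that is precisely where the entire content of the theorem lives, and the proposal offers no viable route through it.

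The paper's proof (Theorem~\ref{thm4}) reverses your order of operations, and the reversal is essential. It runs the token-passing argument \emph{first}, on a $3$-degeneracy order of $G$ (every graph with $\mad<4$ is $3$-degenerate), adding a third species of ``tertiary'' tokens for vertices outside $S$ having three later neighbors in $S$. The clique/token inequality (the analogue of~\eqref{key-ineq}) then classifies vertices into \textsc{Big} ($\prim(v)>\frac18D$) and \textsc{Basic}/\textsc{NonBasic}, and a global token count combined with the $3D$-degeneracy of $G^2$ (Theorem A of Hocquard--Kim--Pierron) yields the constant bounds $|\textsc{Big}|\le 80$ and $|\textsc{NonBasic}|\le 220$. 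The hypothesis $\mad(G)<4$ enters only through two counting claims (Claims~\ref{clmA} and~\ref{clmD}) that bound how many vertices have too many neighbors in $S$, respectively in $\textsc{Basic}\cup\textsc{Big}$. Only after this classification is the $2$-degenerate graph constructed: \textsc{Big} is moved to the end of the order, every other vertex is trimmed to at most two later neighbors, and every vertex expelled from the clique is charged to these constants, giving a loss of at most $|\textsc{NonBasic}|+3|\textsc{Big}|\le 460$. Note also that the paper never bounds the number of deleted \emph{edges} by a constant---only the loss in $|S|$---whereas your accounting demands a constant bound on the cumulative surgery cost including ``tokens disturbed,'' which is both stronger than needed and unattainable. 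Without an a priori classification of the kind the token scheme provides, a peeling-plus-surgery procedure has no way to decide which edges to delete so that only $O(1)$ clique vertices are lost.
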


Over the past 40 years tremendous effort has gone into proving upper
bounds on the chromatic number of squares of graphs; for a recent survey,
see~\cite{squares-survey}.  If a graph $G$ has maximum
degree $\Delta$, then trivially $\chi(G^2)\ge \Delta+1$ since, for every vertex
$v$, the closed neighborhood $N[v]$ is a clique in $G^2$.  
By greedily coloring vertices in the reverse of order $\sigma$ above, 
we get $\chi(G)\le k+1$.  It is easy  to check that $G^2$ is
$\Delta^2$-degenerate; thus $\chi(G^2)\le \Delta^2+1$.  This bound holds with
equality for the 5-cycle and the Petersen graph (but only for 1 or 2 other
connected graphs).

In general, the upper bound $\chi(G^2)\le \Delta^2$ cannot be improved much, as
witnessed by incidence graphs of finite projective planes.  
To prove better upper bounds on $\chi(G^2)$, researchers have focused on more
structured classes of graphs.  For brevity, we mention only two types of these.

{\erdos} and {\nesetril} introduced the notion of strong edge-coloring, which is
equivalent to coloring the square of the line graph $L(G)$.  They
conjectured that if $G$ has $\Delta\le D$, then its strong edge-chromatic
number, denoted $\chi'_s(G)$, is at most $\frac54D^2$; when $D$ is even this is
achieved by a graph $G$ with $|E(G)|=\frac54D^2$.  
So they conjectured that the worst case is when $L(G)^2$ is complete.
The current strongest bound~\cite{HdjdVK} is $\chi'_s(G)\le 1.772D^2$.
This bound is far from the conjecture, which has motivated interest in
bounding the clique number of the square of the line graph $\omega(L(G)^2)$.  
Initial
efforts in this vein~\cite{ChungGTT90} showed that if $L(G)^2$ is a complete graph,
then its order is at most $\frac54D^2$.
Later, \'{S}leszy\'{n}ska-Nowak~\cite{SN} showed that 
$\omega(L(G)^2)\le \frac32D^2$. More recently,
Faron and Postle~\cite{FP} strengthened this bound to $\omega(L(G)^2)\le \frac43D^2$.

The other class of graphs that we discuss in detail is 
planar
graphs.  In 1977, Wegner conjectured that if $G$ is planar with $\Delta\ge 8$,
then $\chi(G^2)\le \lfloor\frac32\Delta\rfloor+1$.  (His conjecture is a
chief reason for much of the interest in bounding $\chi(G^2)$.) To see that this
conjecture is sharp,
consider a so-called ``fat triangle'' formed from $K_3$ by replacing each edge
with $s$ parallel edges.  Subdivide once all edges of the fat triangle except
for two non-parallel edges.  The resulting graph $G$ has $\Delta=2s$ and $G^2$
is a complete graph of order $3s+1$.  (For smaller $\Delta$, Wegner conjectured
weaker upper bounds on $\chi(G^2)$.  In each case, the conjectured upper bound
is sharp, as witnessed by a graph  whose square is complete.)
Wegner's Conjecture has been proved asymptotically~\cite{HavetHMR}: $\chi(G^2)\le
\frac32\Delta(1+o(1))$, where the $o(1)$ is as $\Delta\to \infty$.  
But for exact bounds, the problem remains wide
open except for the case $\Delta=3$ (where 7 colors are both sufficient and
often necessary), which was confirmed by two
groups~\cite{HJT,thomassen-wegner3}.  This state of affairs motivates interest
in bounds on $\omega(G^2)$.  Amini et al.~\cite{AEvdH} proved that $\omega(G^2)\le
\frac32\Delta+O(1)$ for graphs embedded in each fixed surface.  For the plane,
they proved $\omega(G^2)\le \frac32\Delta+76$ when $\Delta\ge
11616$.
Finally, the first author proved the best possible result $\omega(G^2)\le\frac32\Delta+1$, 
when $\Delta \ge 36$.%
\footnote{The paper~\cite{AEvdH} attributes to Cohen and van den Heuvel the bound
$\omega(G^2)\le \lfloor \frac32\Delta\rfloor+1$ for all planar graphs with
$\Delta\ge 41$.  However, we contacted Cohen and 
van den Heuvel, who informed us that they have neither written a proof of this statement, nor have plans to do
so.  Thus, the problem remained open until~\cite{wegner-clique}.} 

Now we reach the specific paper that inspired our present work.  
%
Hocquard, Kim, and Pierron~\cite{HKP} studied $\chi(G^2)$ when $\mad(G)<4$.
They showed that $G^2$ is $3\Delta$-degenerate; thus,
$\chi(G^2)\le 3\Delta+1$.  They also constructed such a graph $G$ with
$\omega(G^2)=\frac52\Delta$, whenever $\Delta$ is even; see 
Example~\ref{example1}.  As a possible
strengthening, they asked the two questions below.

\begin{enumerate}
\item[(i)] Is there $D_0$ such that every 2-degenerate graph $G$ with
$\Delta(G)\ge D_0$ has $\chi(G^2)\le \frac52\Delta(G)$?
\item[(ii)] Is there $D_0$ such that every $G$ with $\Delta(G)\ge D_0$
and $\mad(G)<4$ has $\chi(G^2)\le \frac52\Delta(G)$?
\end{enumerate}

A natural approach to these questions is to try to strengthen the bound on the
degeneracy of $G^2$ whenever $G$ is 2-degenerate.  However, this is not
really possible.  In Example~\ref{example2} below, for each integer $D\ge 2$, we construct 
a family of graphs $G$ such that $G$ is 2-degenerate, $\Delta(G)=D$, and $G^2$
is \emph{not} $(3D-5)$-degenerate.  

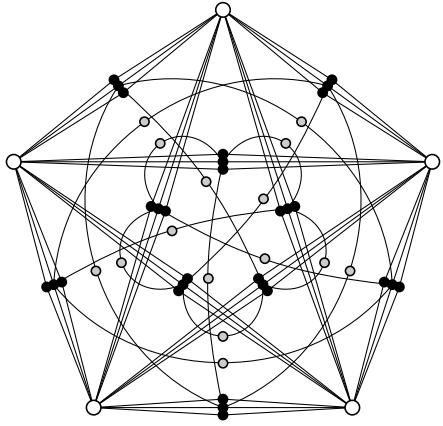
\begin{figure}[!ht]
\centering
\begin{tikzpicture}[scale=.65]
\tikzstyle{uStyle}=[shape = circle, minimum size = 5.5pt, inner sep = 0pt,
outer sep = 0pt, draw, fill=white, semithick]
\tikzstyle{lStyle}=[shape = circle, minimum size = 4.5pt, inner sep = 0pt,
outer sep = 0pt, draw, fill=none, draw=none]
\tikzstyle{usStyle}=[shape = circle, minimum size = 3.5pt, inner sep = 0pt,
outer sep = 0pt, draw, fill=black, semithick]
\tikzstyle{usGStyle}=[shape = circle, minimum size = 3.5pt, inner sep = 0pt,
outer sep = 0pt, draw, fill=gray!40!white, semithick]
\tikzset{every node/.style=uStyle}
\def\rad{4.5cm}

\foreach \i in {1,2,3,4,5}
\draw (\i*72+18:\rad) node[] (x\i) {};
\draw (0,0) node[draw=none] (origin) {};

\foreach \i/\j in {1/2, 2/3, 3/4, 4/5, 5/1}
{
\draw (x\i) -- (barycentric cs:x\i=5,x\j=5,origin=-.4) node[usStyle] {} -- (x\j);
\draw (x\i) -- (barycentric cs:x\i=5,x\j=5,origin=0) node[usStyle] (y\i) {} -- (x\j);
\draw (x\i) -- (barycentric cs:x\i=5,x\j=5,origin=.5) node[usStyle] {} -- (x\j);
}

\foreach \i/\j in {1/3, 2/4, 3/5, 4/1, 5/2}
{
\draw (x\i) -- (barycentric cs:x\i=3,x\j=3,origin=-.6) node[usStyle] {} -- (x\j);
\draw (x\i) -- (barycentric cs:x\i=3,x\j=3,origin=0) node[usStyle] (z\i) {} -- (x\j);
\draw (x\i) -- (barycentric cs:x\i=3,x\j=3,origin=.75) node[usStyle] {} -- (x\j);
}

\foreach \i/\j in {1/3, 2/4, 3/5, 4/1, 5/2}
{
\draw (y\j) edge[bend left=50] (y\i);
\draw (barycentric cs:y\i=3,y\j=3,origin=-3.53) node[usGStyle] {};
\draw (y\i) edge[bend left=12] (z\j);
}

\foreach \i/\j/\k in {1/3/5, 2/4/1, 3/5/2, 4/1/3, 5/2/4}
\draw (barycentric cs:y\i=3,z\j=3.35,y\k=.6) node[usGStyle] {};

\foreach \i/\j in {1/2, 2/3, 3/4, 4/5, 5/1}
\draw (z\j) edge[bend left=90, looseness=2] (z\i);

\foreach \i in {1,...,5}
\draw (barycentric cs:y\i=3,origin=2.0) node[usGStyle] {};

\end{tikzpicture}
\caption{A graph $G_D$ with maximum degree $D$.  In $G_D^2$, the black vertices form a clique of
order $5D/2$.  Each pair of black vertices with no common white neighbor 
has a gray neighbor that is adjacent only to them (though only a few such gray
vertices are shown).
\label{construction-fig}\label{fig1}}
\end{figure}

\begin{example}[\cite{HKP}]
\label{example1}
For every positive even integer $D$, there exists a 2-degenerate graph $G_D$
with maximum degree $D$ such that $\omega(G_D^2)=\frac52D$; see Figure~\ref{fig1}.
\smallskip

Fix a positive integer $D$ that is divisible by 4.  To form $G_D$, (1) start with 
the complete graph $K_5$.  (2) Replace each edge $vw$ with a copy of $K_{2,D/4}$,
identifying the vertices in the part of size 2 with $v$ and $w$.  (3) Now for each
pair, $x$ and $y$, of vertices of degree 2 with no common neighbor, add a vertex
$z_{xy}$ adjacent to both $x$ and $y$.  Call the resulting graph $G_D$.  
Clearly, $G_D$ is 2-degenerate.  It is easy to check that also
$\omega(G_D^2)=\frac52D$; the set $S$ of all vertices adjacent to two vertices
of the original $K_5$ is a clique in $G_D^2$.  Furthermore, $G_D$ has maximum
degree $4(D/4)=D$; this is the degree of each vertex of the original $K_5$ and
every other vertex has lower degree.  (If $D$ is even, but not divisible by 4,
perform the construction above with $D':=D+2$, but
for each edge on the outer 5-cycle of the original $K_5$, after (2) but before
(3), delete one vertex in the part of size $D'/4$.)
\exampleEnd
\end{example}

\begin{example}
\label{example2}
For each positive integer $D$, there exists a graph $H_D$ that is
2-degenerate and has maximum degree $D$, but such that $H_D^2$ is not
$(3D-5)$-degenerate; see Figure~\ref{example2-fig}.

Let $G_D$ be a $D$-regular graph with $4s$ vertices, for some $s\ge 3$ (for
example, $G_D$ could be a circulant); let $T:=V(G_D)$.  Subdivide
each edge of $G_D$; call the resulting graph $G_D'$ and call these new vertices $S$.
Build an auxiliary graph $J$ with vertex set $S$ and $vw\in E(J)$ if $v$ and $w$
are distinct and have no common neighbor in $G_D'$.  Note that $J$ is regular of degree
$|E(G_D)|-(2D-1)=2sD-(2D-1)>\frac12|E(G_D)|=\frac12|S|=\frac12|V(J)|$.  Thus, by
Dirac's Theorem, $J$ has a Hamiltonian cycle, $C$.  By deleting the edges of
$C$ and repeating the argument, we find $\lceil\frac12(D-2)\rceil$ edge-disjoint
Hamiltonian cycles $C_i$ in $J$.  If $D$ is even, let $J_1$ be the union of
these cycles $C_i$; if $D$ is odd, let $J_1$ be the union of all but one of
these cycles $C_i$ and a 1-factor from the final cycle.  Note that $J_1$ is
$(D-2)$-regular.  Now, for each edge $vw\in E(J_1)$, add to
$G'_D$ a new vertex $z_{vw}$ adjacent to $v$ and $w$; call the resulting graph
$H_D$.  Now we can check that $H_D$ is 2-degenerate with maximum
degree $D$ and $H_D^2[S]$ is regular of degree $3D-4$.  \exampleEnd
\end{example}

\begin{figure}[!h]
\centering
\begin{tikzpicture}[xscale=.3, yscale=.25] 
\tikzstyle{uStyle}=[shape = circle, minimum size = 5.5pt, inner sep = 0pt,
outer sep = 0pt, draw, fill=white, semithick]
\tikzstyle{lStyle}=[shape = circle, minimum size = 4.5pt, inner sep = 0pt,
outer sep = 0pt, draw, fill=none, draw=none]
\tikzstyle{usStyle}=[shape = circle, minimum size = 2.5pt, inner sep = 0pt,
outer sep = 0pt, draw, fill=black, semithick]
\tikzstyle{usGStyle}=[shape = circle, minimum size = 2.5pt, inner sep = 0pt,
outer sep = 0pt, draw, fill=gray!40!white, semithick]
\tikzset{every node/.style=uStyle}
\def\rad{4.5cm}

\draw (0,2) node (x1) {} -- (0,-2) node (x2) {};
\foreach \ang in {0, 10, 20, 30}
{
\draw (x1) --++ (\ang:\rad);
\draw (x2) --++ (-\ang:\rad);
}
\draw (2,-6) node[lStyle] {\footnotesize{$G_D$}};

\begin{scope}[xshift=3.5in]

\draw (0,2) node (x1) {} -- (0,-2) node (x2) {};
\draw (barycentric cs:x1=1,x2=1) node[usStyle] {};
\foreach \ang in {0, 10, 20, 30}
{
\draw (x1) --++ (\ang:.65*\rad) node[usStyle] {} --++ (\ang:.35*\rad);
\draw (x2) --++ (-\ang:.65*\rad) node[usStyle] {} --++ (-\ang:.35*\rad);

\draw (2,-6) node[lStyle] {\footnotesize{$G_D'$}};
}

\end{scope}

\begin{scope}[xshift=7in]
\draw (0,0) node[usStyle] (y1) {} --++ (1.5,3.5) node[usStyle] {};
\draw (y1) --++ (1.5,-3.5) node[usStyle] {};
\draw (1,0) node[usStyle] (y2) {} --++ (1.5,3.5) node[usStyle] {};
\draw (y2) --++ (1.5,-3.5) node[usStyle] {};

\draw (2,-5.5) node[lStyle] {\footnotesize{$C_1$, $C_2$}};
\draw (2,-6.5) node[lStyle] {\footnotesize{$\in J$}};
\end{scope}

\begin{scope}[xshift=9.75in]
\draw (0,0) node[usStyle] (y1) {} --++ (1.5,3.5) node[usStyle] {};
\draw (y1) --++ (1.5,-3.5) node[usStyle] {};
\draw (y1) --++ (3,3.5) node[usStyle] {};
\draw (2,-6.0) node[lStyle] {\footnotesize{$J_1$}};
\end{scope}

\begin{scope}[xshift=13in]

\draw (0,2) node (x1) {} -- (0,-2) node (x2) {};
\draw (barycentric cs:x1=1,x2=1) node[usStyle] (y1) {};
\foreach \ang in {0, 10, 20, 30}
{
\draw (x1) --++ (\ang:.65*\rad) node[usStyle] {} --++ (\ang:.35*\rad);
\draw (x2) --++ (-\ang:.65*\rad) node[usStyle] {} --++ (-\ang:.35*\rad);

\draw (7.5,3.5) node[usStyle] (yJ1) {};
\draw (9,3.5) node[usStyle] (yJ2) {};
\draw (7.5,-3.5) node[usStyle] (yJ3) {};

\draw (y1) edge [bend right=22.5] (yJ1);
\draw (y1) edge [bend right=22.5] (yJ2);
\draw (y1) edge [bend left=22.5] (yJ3);

\draw (5.9,2) node[usGStyle] {};
\draw (7.2,2) node[usGStyle] {};
\draw (5.9,-2) node[usGStyle] {};

\draw (4,-6) node[lStyle] {\footnotesize{$H_D$}};
}

\end{scope}

\end{tikzpicture}
\caption{The process of forming $H_D$ from $G_D$ in Example~\ref{example2}, when $D=5$, as viewed from the
perspective of a single edge in $G_D$. \label{example2-fig}}
\end{figure}
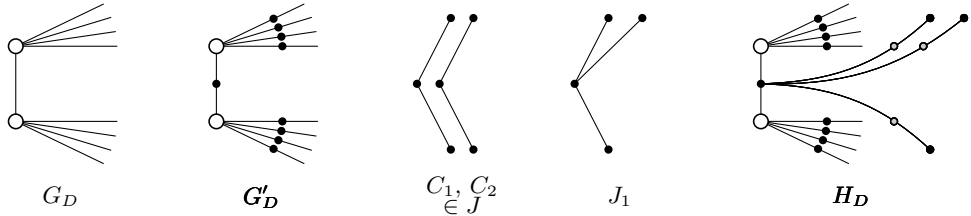

Example~\ref{example2} shows that even proving the bound $\chi(G^2)\le 3D-4$
will require new ideas.  The degeneracy of $G^2$ alone will not suffice.
%
(We can generalize Example~\ref{example2} to
something similar for $k$-degenerate graphs with arbitrary $k\ge 2$.
But to keep our focus here on 2-degenerate graphs, we defer these details to
Example~\ref{example3}.)

\subsection{Overview of Proofs}

The proofs of both Theorems~\ref{main1} and~\ref{main2} follow a similar
outline, though the latter is more complicated.  
We have made some effort to minimize the constant in Theorem~\ref{main1}, but
have not made much effort in this direction in Theorem~\ref{main2}, prefering to
present a simpler proof.

\begin{defn}
\label{def1}
A \EmphE{$k$-degeneracy order}{0mm} for a graph $G$ is an order of $V(G)$ such that
each vertex has at most $k$ neighbors later in the order.
A graph $G$ is \EmphE{nice}{4mm}, w.r.t. a clique $S$ in $G^2$, if (a) $S$ is a clique in
$G^2$, (b) $S$ is an independent set in $G$, and (c) $G$ has a $2$-degeneracy
order $\sigma$ such that all vertices of $S$ appear consecutively in $\sigma$.
The notion is inspired by the construction of Hocquard, Kim, and Pierron,
described in Example~\ref{example1}, which is indeed nice.
\end{defn}

In Section~\ref{nice-sec}, we consider only graphs $G$ that are nice
w.r.t.~a maximum clique $S$ in $G^2$.  

\begin{thm}
\label{main3}
If $G$ is nice w.r.t.~a maximum clique $S$ in $G^2$ and $\Delta(G)\le D$,
then $\omega(G^2)\le \frac52D$.  
\end{thm}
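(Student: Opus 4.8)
The plan is to use niceness to pin down where the common neighbours of pairs in $S$ can sit, turn the problem into a clean extremal question about a multigraph on the vertices lying after $S$, and then bound the edge count of that multigraph by $\frac52D$.

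First I would fix a $2$-degeneracy order $\sigma$ in which $S$ appears consecutively, and write $B$ for the vertices preceding $S$ and $F$ for those following it. Since $S$ is independent in $G$ and occupies a contiguous block, any common $G$-neighbour of two vertices of $S$ must lie in $B\cup F$. Two consequences of the degeneracy order do all the work: each $b\in B$ has at most two neighbours in $S$ (these are among its $\le D$ forward neighbours, of which at most $2$ survive the degeneracy bound), so $b$ witnesses at most one pair of $S$; and each $s\in S$ has at most two neighbours in $F$, namely its forward neighbours. Thus every pair $\{x,y\}\subseteq S$ is certified either by a shared neighbour in $F$ or by a private degree-$2$ vertex of $B$.

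Next I would dispose of the degenerate cases. If some $s\in S$ has at most one neighbour in $F$, then each neighbour of $s$ in $B$ serves at most one other vertex of $S$, and a single $F$-neighbour $f$ serves at most $\deg_S(f)-1\le D-1$ of them; summing over the at most $D$ neighbours of $s$ gives $|S|-1\le (D-1)+(D-1)$, so $|S|\le 2D-1\le\frac52D$. Hence I may assume every $s\in S$ has exactly two neighbours in $F$. Now build a multigraph $M$ on vertex set $F$ by turning each $s\in S$ into an edge joining its two $F$-neighbours; then $|E(M)|=|S|=:n$ and $\deg_M(f)=\deg_S(f)\le D$. Two vertices of $S$ share an $F$-neighbour exactly when their edges of $M$ meet, so two vertices whose edges are vertex-disjoint need a private degree-$2$ vertex of $B$, and these consume distinct members of the $\deg(s)-2\le D-2$ neighbours of $s$ lying in $B$. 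This yields the crucial non-adjacency information: for every edge $e=f_1f_2$ of $M$ of multiplicity $\mu_e$, the number of edges disjoint from $e$, which equals $n-\deg_M(f_1)-\deg_M(f_2)+\mu_e$, is at most $D-2$; equivalently $\deg_M(f_1)+\deg_M(f_2)\ge n-D+2+\mu_e$.

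It then remains to show $n\le\frac52D$ for any multigraph with $\deg_M\le D$ satisfying this edge inequality. Letting $k$ be the number of non-isolated vertices, the case $k\le 5$ is immediate, since $2n=\sum_f\deg_M(f)\le kD\le 5D$. The heart of the matter is $k\ge 6$: rewriting the edge inequality as $\mu_e\le\deg_M(f_1)+\deg_M(f_2)-n+D-2$ caps every multiplicity, and because six or more vertices cannot all carry degree near $D$ while supporting the large endpoint-degree-sums demanded, these caps should push $n$ strictly below $\frac52D$ (for instance the complete uniform blow-up on six vertices yields only $n\le\frac52(D-2)$). I expect this final step to be the main obstacle. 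The naive aggregate route — summing the edge inequality over all edges and using convexity in the form $\sum_f\deg_M(f)^2\le 2nD$ — only delivers the weaker bound $n\le 3D$, because the estimate ``$\mathrm{disjoint}(e)\le D-2$'' carries slack of order $D$ per edge (in the extremal example $\mathrm{disjoint}(e)=\tfrac34 D$). Recovering the sharp constant $\frac52$ forces one to retain the multiplicities $\mu_e$ rather than discard them, which is precisely the accounting the token-passing technique is meant to perform; the extremal configuration is the $K_5$-blow-up underlying the Hocquard--Kim--Pierron construction, where the degree bounds $\deg_M(f)=D$ are tight while the disjointness bounds are slack.
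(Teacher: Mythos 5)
Your reduction is correct, and it is essentially the paper's own: your multigraph $M$ on the vertices following $S$ (each $s\in S$ becoming an edge joining its two forward neighbours) is exactly the multigraph $H$ that the paper builds in Proposition~\ref{prop1} by deleting the vertices before $S$ and contracting one edge at each vertex of $S$; your disjointness bound (each edge of $M$ is disjoint from at most $D-2$ other edges) is precisely the hypothesis of Theorem~\ref{thm1}; and your handling of the degenerate cases (a vertex of $S$ with at most one forward neighbour, or at most five non-isolated vertices in $M$) matches the paper's and is sound.

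But the proposal stops exactly where the real work begins. The statement you still need --- every multigraph with $\Delta\le D$ in which each edge shares an endpoint with all but at most $D-2$ of the others has at most $\frac52D$ edges --- \emph{is} Theorem~\ref{thm1}, and proving it is the main content of Section~\ref{nice-sec}; your paragraph about it (``these caps should push $n$ strictly below $\frac52D$'', ``I expect this final step to be the main obstacle'') is an accurate assessment of the difficulty, not an argument. In particular, the heuristic that six or more vertices ``cannot all carry degree near $D$'' cannot be made to work on its own: a uniform blow-up of $K_6$ with multiplicity $\frac16(D-2)$ satisfies all of your constraints and has $\frac52D-5$ edges, so no soft degree or convexity estimate can handle $k\ge 6$, as you yourself observe when noting that the aggregate route only gives $n\le 3D$. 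What the paper actually does is take a minimal counterexample $H$, pass to the complement $\overline{H_0}$ of its underlying simple graph, and prove through a chain of claims --- driven by the pairwise inequalities $d(v)+d(w)-\mu(vw)\ge\frac32D+2$ for edges and $\mu(vx)+\mu(vy)+\mu(wx)+\mu(wy)\ge\frac12D+2$ for disjoint pairs of edges --- that $\overline{H_0}$ is a disjoint union of stars, then a matching, then has at most one edge, then exactly one edge with $|V(H_0)|=6$, and finally rules out that last configuration by a weighted sum of the edge inequalities. None of this structural analysis (nor any substitute for it) appears in your proposal, so as it stands the proof has a genuine gap at its center.
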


Theorem~\ref{main3} is exactly sharp, as
witnessed by the graphs in Example~\ref{example1}.  
(In fact, these graphs and a class of similar ones are the unique extremal
examples, as can be seen by studying the proof more carefully. We give more
details following the proof of Proposition~\ref{prop1}.) To prove
Theorem~\ref{main1}, we consider a graph $G$ that is 2-degenerate and fix a
vertex subset $S$ such that $S$ is a maximum clique in $G^2$.  
We show that there exists $S'\subseteq S$ such
that $|S\setminus S'|\le 72$ and there exists a graph $G'$ (a subgraph of $G$)
such that $G'$ is nice w.r.t.~$S'$.  Then the result follows by applying
Theorem~\ref{main3} to $G'$.  To prove Theorem~\ref{main2}, our approach is
similar.  We consider a graph $G$ with $\mad(G)<4$ and fix a vertex subset $S$
such that $S$ is a maximum clique in $G^2$.
We show that there exists $S'\subseteq S$ such
that $|S\setminus S'|\le 460$ and there exists a graph $G'$ (a subgraph of $G$)
such that $G'$ is nice w.r.t.~$S'$.  Then the result again follows by applying
Theorem~\ref{main3} to $G'$.  

In the proof of Theorem~\ref{main1} (resp.~Theorem~\ref{main2}), we introduce a technique
of ``token passing'', where vertices are deleted in a 2-degeneracy
(resp.~3-degeneracy) order
$\sigma$ and each vertex passes tokens, immediately before it is deleted, to
its neighbors later in $\sigma$.  These tokens facilitate a sort of amortized
analysis that is common in certain areas of algorithm analysis.  However, we
have not previously seen it used for the types of problems we consider in the
present paper.  And we believe it is likely to be applicable to further similar
problems.

\begin{remark}
After the preparation of this manuscript, Kim and Lian~\cite{KL} extended our ideas
to strengthen Theorem~\ref{main1}.  Specifically, they showed that if $G$ is 2-degenerate
and has maximum degree $D$ at least $3.3\times 10^6$, then $\omega(G^2)\le \frac52D$.
This upper bound on $\omega(G^2)$ is best possible.
\end{remark}

\section{Big Cliques in Squares of Nice 2-Degenerate Graphs}
\label{nice-sec}

The goal of this section is to prove the following result.

\begin{thm}
Let $H$ be a multigraph with $\Delta(H)\le D$, for some positive integer $D$.
If each edge of $H$ shares at least one endpoint with all but at most $D-2$
other edges of $H$, then $|E(H)|\le \frac52 D$.
\label{thm1}
\end{thm}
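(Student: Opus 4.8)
The plan is to recast the ``at most $D-2$ disjoint edges'' hypothesis as a degree condition and then split on the matching number $\nu(H)$. Write $m:=|E(H)|$ and let $\mu(uv)$ denote the multiplicity of an edge $uv$. The number of edges meeting a fixed edge $e=uv$ (other than $e$ itself) is exactly $\deg(u)+\deg(v)-\mu(uv)-1$, so the hypothesis is equivalent to
\begin{equation*}
\deg(u)+\deg(v)-\mu(uv)\ge m-D+2 \qquad\text{for every edge } uv. \tag{$\star$}
\end{equation*}
First I would dispose of large matching number. Fix a maximum matching $M$ of size $k:=\nu(H)$; since $M$ is maximum, every edge of $H$ meets $V(M)$, whence $\sum_{w\in V(M)}\deg(w)\le 2m$. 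Summing $(\star)$ over the $k$ edges of $M$ and using $\sum_{e\in M}\mu(e)\ge k$ yields $2m-k\ge k(m-D+2)$, i.e.\ $m(k-2)\le k(D-3)$. Hence $k\ge 4$ forces $m\le 2D-6<\tfrac52 D$, and it remains only to treat $\nu(H)\le 3$.

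For the remaining cases I would argue by contradiction, assuming $m>\tfrac52 D$. Then $(\star)$ together with $\mu\ge 1$ shows that every edge $uv$ satisfies $\deg(u)+\deg(v)>\tfrac32 D$, so no two vertices of degree at most $\tfrac34 D$ are adjacent. Thus the set $B$ of \emph{big} vertices (degree exceeding $\tfrac34 D$) is a vertex cover of $H$, and every pendant edge (one endpoint in $B$) has its other endpoint of degree at most $\tfrac34 D$. The subcase $\nu(H)=1$ is immediate: an intersecting family of edges is a star or a triangle, giving $m\le D$ or $m\le\tfrac32 D$, both below $\tfrac52 D$. In the subcases $\nu(H)\in\{2,3\}$ we additionally have the small vertex cover $V(M)$, of size at most $6$.

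It then remains to finish $\nu(H)\in\{2,3\}$, where $H$ has a vertex cover of size at most $6$ and pendant edges attach cover vertices to low-degree vertices. Here I would split $m$ into edges lying inside the cover and pendant edges, and bound each contribution using $(\star)$. The governing phenomenon is a ``$K_6$ obstruction'': for a clique of big vertices, applying $(\star)$ to one of its edges $xy$ shows that the edges among the remaining big vertices — all mutually disjoint from $xy$ — already number roughly $\binom{|B|-2}{2}$ times the typical multiplicity, which exceeds $D-2$ once six big vertices are mutually adjacent. This is precisely the computation ruling out $\tfrac{D}{5}K_6$ while permitting the extremal $\tfrac{D}{4}K_5$, and it is where the constant $5$ (hence $\tfrac52$) enters. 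Making this precise for non-uniform multiplicities, while simultaneously accounting for pendant edges and for the adjacency patterns among the (at most six) big vertices that are compatible with $\nu(H)\le 3$, is the technical heart of the argument and the step I expect to be the main obstacle. I would stress that purely global averaging cannot close this gap: even the strongest averaged consequence of $(\star)$ yields only a bound of the form $cD$ with $c$ strictly larger than $\tfrac52$ (the uniform complete multigraph is the bottleneck), so the sharp constant genuinely requires applying the per-edge condition $(\star)$ within $B$.
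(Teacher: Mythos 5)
There is a genuine gap, and you flag it yourself: the cases $\nu(H)\in\{2,3\}$ are exactly where the theorem lives, and your proposal does not prove them. Everything you do carry out is correct --- the reformulation $(\star)$ (which is also the paper's first step, its Claim 2), the computation showing that a matching of size $4$ forces $m\le 2D-6$, the case $\nu(H)=1$, and the observation that vertices of degree at most $\frac34 D$ form an independent set --- but these are the easy reductions. The extremal configuration ($K_5$ with every edge of multiplicity $D/4$) has matching number $2$, and the near-extremal configurations supported on $K_6$ minus an edge have matching number $3$, so \emph{all} of the difficulty is concentrated in the cases you defer. Your ``$K_6$ obstruction'' is only a heuristic for uniform multiplicities on a clique of big vertices; it gives no mechanism for non-uniform multiplicities, for big-vertex sets that are not cliques, or for edges to low-degree vertices. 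Note also that $\nu(H)\le 3$ does not bound $|V(H)|$ (a cover vertex can have edges to arbitrarily many distinct low-degree vertices), so ``a vertex cover of size at most $6$'' still leaves a large configuration space, and nothing in your sketch controls the part of $m$ carried by such edges.

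For comparison, the paper's proof spends essentially all of its effort on precisely this missing part, and does so without ever invoking matching numbers. Taking a counterexample minimizing $|E(H)|+|V(H)|$, it shows the underlying simple graph $H_0$ has minimum degree at least $3$ (this is what eliminates pendant-type structure), then analyzes the complement $\overline{H_0}$ through successively stronger claims: it is a disjoint union of stars, then a matching, then has at most one edge; counting arguments then force $\overline{H_0}$ to have exactly one edge with $|V(H)|=6$, and a final weighted count over $K_6$ minus an edge (summing $(\star)$ with weight $2$ on the $8$ edges at the two nonadjacent vertices and weight $1$ on the remaining $6$, so that every edge of $H$ is covered exactly $13$ times) yields $|E(H)|\le \frac{22}{9}(D-2)$, a contradiction. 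Your matching-number reduction could in principle be grafted onto the front of such an argument, but as it stands the structural analysis and the careful non-uniform counting --- i.e., the actual proof --- are absent.
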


Intuitively, Theorem~\ref{thm1} essentially rephrases Theorem~\ref{main3} in
terms of edges in multigraphs, for which we have standard
terminology.  More formally, we have the following proposition.

\begin{prop}
Theorem~\ref{thm1} implies Theorem~\ref{main3}.
\label{prop1}
\end{prop}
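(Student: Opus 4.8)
The plan is to exhibit a correspondence between the combinatorial data of Theorem~\ref{main3} and that of Theorem~\ref{thm1}. Given a graph $G$ that is nice w.r.t.~a maximum clique $S$ in $G^2$, with $\Delta(G)\le D$, I would build a multigraph $H$ whose edges are in bijection with the vertices of $S$, and verify that $H$ satisfies the hypotheses of Theorem~\ref{thm1}. Since $|E(H)|=|S|=\omega(G^2)$, the conclusion $|E(H)|\le \frac52 D$ will yield $\omega(G^2)\le \frac52 D$, as desired. The natural construction exploits property (c): because $G$ has a $2$-degeneracy order $\sigma$ in which all of $S$ appears consecutively, each vertex $u\in S$ has at most two neighbors later in $\sigma$; but since $S$ is independent (property (b)) and $S$ is consecutive, the relevant ``back-neighbors'' of $u$ outside $S$ give us (at most) two vertices of $G$ to attach to. So I would let the vertex set of $H$ be (roughly) the set of vertices of $G$ that serve as these earlier neighbors, and for each $u\in S$ create an edge joining the (at most two) such neighbors of $u$.

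The key steps, in order, are as follows. First I would fix the $2$-degeneracy order $\sigma$ and, for each $u\in S$, identify its neighbors in $G$ that precede the block of $S$; the degeneracy condition together with the consecutiveness of $S$ should force each $u$ to have at most two such neighbors, which become the endpoints of the edge $e_u$ of $H$ (using a loop or a specially designated endpoint when there are fewer than two). Second, I would bound the maximum degree of $H$: a vertex $x$ of $H$ has $H$-degree equal to the number of $u\in S$ incident to $x$, i.e.\ the number of vertices of $S$ that pick $x$ as an earlier neighbor; since all such $u$ lie in $N_G(x)$ and $\Delta(G)\le D$, this is at most $D$, giving $\Delta(H)\le D$. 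Third, and most importantly, I would translate the adjacency of $S$ in $G^2$ into the ``shared endpoint'' condition: two vertices $u,v\in S$ are adjacent in $G^2$, meaning they are at distance $2$ in $G$ and hence share a common neighbor; I would argue that this common neighbor can be taken to be one of the chosen earlier endpoints, so that the edges $e_u$ and $e_v$ share an endpoint in $H$. Counting how many edges $e_v$ can \emph{fail} to meet $e_u$ then corresponds to how many $v\in S$ fail to be adjacent to $u$ in $G^2$, which is controlled by $\Delta(G)\le D$.

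The main obstacle I anticipate is the third step: pinning down \emph{which} common neighbor witnesses the $G^2$-adjacency and ensuring it is always one of the (at most two) designated endpoints of the edge. A priori, two vertices $u,v\in S$ at distance $2$ in $G$ might share a neighbor $w$ that lies \emph{after} the block of $S$ in $\sigma$, rather than one of the earlier neighbors used to define the edges. I would need to use the consecutiveness of $S$ and the $2$-degeneracy condition carefully: a later common neighbor $w$ would itself have at most two neighbors after it, but both $u$ and $v$ precede $w$ (or are handled by the block structure), so the common neighbor should be forced into the ``earlier'' category, or else a counting/degeneracy contradiction arises. Getting this incidence statement exactly right is the crux, and I expect it to require a short case analysis on the relative positions of $u$, $v$, and their common neighbor in $\sigma$.

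Once these three facts are established, the implication is immediate: $H$ is a multigraph with $\Delta(H)\le D$ in which every edge shares an endpoint with all but at most $D-2$ others (the ``$D-2$'' slack coming precisely from the count of non-adjacencies in $G^2$, after accounting for the at-most-$D$ degree and removing the edge itself and those forced to meet it). Applying Theorem~\ref{thm1} gives $|E(H)|\le \frac52 D$, and since $|E(H)|=|S|=\omega(G^2)$, we conclude $\omega(G^2)\le \frac52 D$, completing the proof of Proposition~\ref{prop1}.
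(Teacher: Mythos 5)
Your high-level skeleton is the same as the paper's (build a multigraph $H$ whose edges are in bijection with $S$, translate $G^2$-adjacency into shared endpoints, apply Theorem~\ref{thm1}), but the construction you propose has the degeneracy condition backwards, and this is a genuine gap, not a detail. A $2$-degeneracy order bounds the number of neighbors of each vertex \emph{later} in $\sigma$; it says nothing about earlier neighbors. So your first step --- ``the degeneracy condition together with the consecutiveness of $S$ should force each $u$ to have at most two neighbors preceding the block of $S$'' --- is false: a vertex $u\in S$ can have up to $D-2$ neighbors before the block (in Example~\ref{example1} the vertices of $S$ have roughly $3D/4$ earlier neighbors, the gray vertices $z_{xy}$). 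Consequently your edges $e_u$ are not even well-defined. The anticipated resolution of your ``crux'' step is also unavailable: you hope that common neighbors lying \emph{after} the block of $S$ can be ruled out by a counting argument, but in the extremal graphs of Example~\ref{example1} the opposite happens --- most $G^2$-adjacencies inside $S$ are witnessed by the original $K_5$ vertices, which come \emph{after} $S$ in the order. No contradiction is there to be found.

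The paper's proof swaps the roles of ``earlier'' and ``later'' relative to your plan, and then everything you wanted becomes true. Partition $V(G)$ into $R$ (before $S$ in $\sigma$), $S$, and $T$ (after $S$). Each $v\in S$ has at most two neighbors in $T$ (these are its later neighbors), and these become the endpoints of the edge $e_v$: delete $R$ and contract one edge at each vertex of $S$. The earlier neighbors are not the endpoints; they are precisely the source of the $D-2$ slack. Each $w\in R$ has at most two neighbors later in $\sigma$, and every vertex of $S$ is later than $w$, so $w$ can witness a $G^2$-adjacency between $v$ and at most one other vertex of $S$; since $v$ has at most $D-2$ neighbors in $R$ (two of its $\le D$ edges go to $T$), at most $D-2$ vertices of $S$ are adjacent to $v$ only through $R$. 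All remaining vertices of $S$ must share with $v$ a common neighbor in $T$, i.e.\ their edges in $H$ share an endpoint with $e_v$, which is exactly the hypothesis of Theorem~\ref{thm1}. (One also needs the easy degenerate case: if $v$ has at most one neighbor in $T$, then $|S|\le 2D$ directly, and one should first delete all edges induced by $V(G)\setminus S$ so that the contractions behave.) So your proposal is salvageable, but only after reversing the choice of endpoints; as written, both the construction and the key incidence step fail.
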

\begin{proof}
Let $G$\aside{$G$, $S$} be nice w.r.t.~a maximum clique $S$ in $G^2$ and $\Delta(G)\le D$.  We
may assume that $V(G)\setminus S$ is an independent set; otherwise we delete all
edges induced by $V(G)\setminus S$ and again call the resulting graph $G$ (note
that it still satisfies all hypotheses of Theorem~\ref{main3}).  Let
\Emph{$\sigma$} be a 2-degeneracy order witnessing that $G$ is nice w.r.t.~$S$
and partition $V(G)$ into \Emph{$R,S,T$}, where $R$ are vertices before $S$ in
$\sigma$, and $T$ are vertices after $S$.  Form \Emph{$H$} from $G$ as follows:
delete all vertices of $R$ and contract one edge incident to each vertex of
$S$.  Figure~\ref{H-from-G-fig} shows an example.

\begin{figure}[!ht]
\centering
\begin{tikzpicture}[scale=.4]
\tikzstyle{uStyle}=[shape = circle, minimum size = 5.5pt, inner sep = 0pt,
outer sep = 0pt, draw, fill=white, semithick]
\tikzstyle{lStyle}=[shape = circle, minimum size = 4.5pt, inner sep = 0pt,
outer sep = 0pt, draw, fill=none, draw=none]
\tikzstyle{usStyle}=[shape = circle, minimum size = 2.5pt, inner sep = 0pt,
outer sep = 0pt, draw, fill=black, semithick]
\tikzstyle{usGStyle}=[shape = circle, minimum size = 2.5pt, inner sep = 0pt,
outer sep = 0pt, draw, fill=gray!40!white, semithick]
\tikzset{every node/.style=uStyle}
\def\rad{4.5cm}

\foreach \i in {1,2,3,4,5}
\draw (\i*72+18:\rad) node[] (x\i) {};
\draw (0,0) node[draw=none] (origin) {};

\foreach \i/\j in {1/2, 2/3, 3/4, 4/5, 5/1}
{
\draw (x\i) -- (barycentric cs:x\i=5,x\j=5,origin=-.4) node[usStyle] {} -- (x\j);
\draw (x\i) -- (barycentric cs:x\i=5,x\j=5,origin=0) node[usStyle] (y\i) {} -- (x\j);
\draw (x\i) -- (barycentric cs:x\i=5,x\j=5,origin=.5) node[usStyle] {} -- (x\j);
}

\foreach \i/\j in {1/3, 2/4, 3/5, 4/1, 5/2}
{
\draw (x\i) -- (barycentric cs:x\i=3,x\j=3,origin=-.6) node[usStyle] {} -- (x\j);
\draw (x\i) -- (barycentric cs:x\i=3,x\j=3,origin=0) node[usStyle] (z\i) {} -- (x\j);
\draw (x\i) -- (barycentric cs:x\i=3,x\j=3,origin=.75) node[usStyle] {} -- (x\j);
}

\foreach \i/\j in {1/3, 2/4, 3/5, 4/1, 5/2}
{
\draw (y\j) edge[bend left=50] (y\i);
\draw (barycentric cs:y\i=3,y\j=3,origin=-3.53) node[usGStyle] {};
\draw (y\i) edge[bend left=12] (z\j);
}

\foreach \i/\j/\k in {1/3/5, 2/4/1, 3/5/2, 4/1/3, 5/2/4}
\draw (barycentric cs:y\i=3,z\j=3.35,y\k=.6) node[usGStyle] {};

\foreach \i/\j in {1/2, 2/3, 3/4, 4/5, 5/1}
\draw (z\j) edge[bend left=90, looseness=2] (z\i);

\foreach \i in {1,...,5}
\draw (barycentric cs:y\i=3,origin=2.0) node[usGStyle] {};


\draw (7.0,-.8) node[lStyle] {\huge{$\rightarrow$}};

\begin{scope}[xshift=5.5in]

\foreach \i in {1,2,3,4,5}
\draw (\i*72+18:\rad) node[] (x\i) {};
\draw (0,0) node[draw=none] (origin) {};

\foreach \i/\j in {1/2, 2/3, 3/4, 4/5, 5/1}
{
\draw (x\j) edge[bend left=10] (x\i);
\draw (x\j) edge (x\i);
\draw (x\j) edge[bend right=10] (x\i);
}

\foreach \i/\j in {1/3, 2/4, 3/5, 4/1, 5/2}
{
\draw (x\j) edge[bend left=7] (x\i);
\draw (x\j) edge (x\i);
\draw (x\j) edge[bend right=7] (x\i);
}
\end{scope}

\end{tikzpicture}
\caption{The graph $G_D$ (left) from Example~\ref{example1} and the graph $H_D$
(right) produced from $G_D$ by the operations described in the first paragraph proving
Proposition~\ref{prop1}.
\label{H-from-G-fig}}
\end{figure}
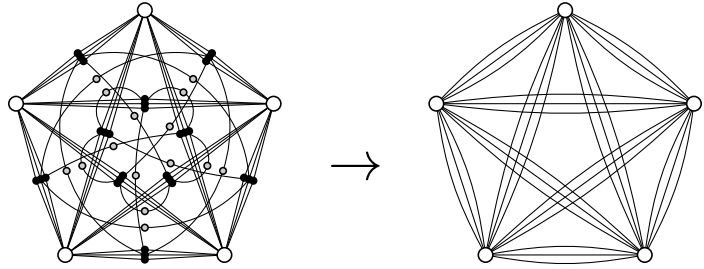

It is easy to check that
$\Delta(H)\le \Delta(G)$ and $|E(H)|=|S|$.  Consider $v\in S$.  Since $\sigma$
is a 2-degeneracy order, each neighbor $w\in R$ of $v$ gives rise to at most one
edge $vv'\in E(G^2)$.  If $v$ has at most one neighbor in $T$, then
$v$ has degree at most $2(D-1)$ in $G^2$, so $|S|\le 2D$.  So assume instead
that $v$ has at least two neighbors in $T$ (exactly two, in fact, since $\sigma$
is a 2-degeneracy order).  Since $\Delta(G)\le D$, vertex $v$ has at most $D-2$
neighbors in $R$.  So for all but at most $D-2$ vertices $v'$ in $S$, vertex $v$
has a neighbor $w$ in $T$ in common with $v'$.  Thus, the edge in $H$ arising
from $v$ has a common endpoint with all edges of $H$ except for at most $D-2$.
Hence, $H$ satisfies the hypotheses of Theorem~\ref{thm1}.  As a result,
$\omega(G^2)=|E(H)|\le \frac52D$, as desired.  
\end{proof}

To see that Theorem~\ref{thm1} is best possible, start with $K_5$ and replace
each edge with $D/4$ parallel edges.  We can also say a bit about uniqueness and
stability.  The bulk of the work in proving Theorem~\ref{thm1} is considering
the case $|V(H)|\ge 6$ and proving that $|E(H)|\le \frac52D$; in fact, we can
prove that this inequality is strict.  Thus, every graph $H$ that satisfies
Theorem~\ref{thm1} with equality has $|V(H)|=5$.  However, we do not have much
``stability'', since (for infinitely many values of $D$) there is a graph $H$
with $|V(H)|=6$ and $|E(H)|=\frac52D-5$.  Namely, start with $K_6$ and replace
each edge with $\frac16(D-2)$ parallel edges.  In fact, even the extremal examples on
5 vertices are not unique.  Such examples require only that the graph is
$D$-regular and that each edge shares an endpoint with all but at most $D-2$
other edges.  For large values of $D$, there are many such graphs.  For example,
starting with the example in the first sentence of this paragraph, we can pick
an arbitrary 5-cycle $\C$ and delete $2$ copies of each edge on $\C$.  This
gives an example with maximum degree $D-4$ that differs from that first example.

\subsection{Proof of Theorem~\ref{thm1}}

We assume Theorem~\ref{thm1} is false and let $H$ be a counterexample
minimizing $|E(H)|+|V(H)|$.  

\begin{defn}
For a multigraph $H$, we write $H_0$\aside{$H_0$, $\overline{H_0}$} for the
underlying simple graph and $\overline{H_0}$ for the complement of this simple graph.
For each $vw\in E(H)$, we write \Emph{$\mu(vw)$} to denote the multiplicity of
edge $vw$.  For each $v\in V(H)$, let \Emph{$E(v)$} denote the set of edges
incident to $v$.
\end{defn}

If we can show that $H_0$ is a complete graph, say $K_h$, then we can conclude
with a simple counting argument, taking advantage of the symmetry of $K_h$; see
Claim~\ref{lem8}.  So this is the main work of the present section.
Equivalently, we show that $\overline{H_0}$ is edgeless.  To this end, we show
that $\overline{H_0}$ is, first, a vertex disjoint union of stars
(Claim~\ref{lem5}); second, a matching (Claim~\ref{lem6}); and third, a graph
with at most one edge (Claim~\ref{lem7}).

\begin{clm}
$H$ is connected and $|V(H)|\ge 6$.
\label{lem0}
\end{clm}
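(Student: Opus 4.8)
The plan is to exploit that $H$ is a \emph{minimal} counterexample to Theorem~\ref{thm1}: thus $\Delta(H)\le D$, every edge of $H$ fails to share an endpoint with at most $D-2$ other edges, yet $|E(H)|>\frac52 D$; and moreover every proper subgraph of $H$ satisfying these same two hypotheses already obeys the conclusion $|E|\le\frac52 D$. I would treat the two assertions separately, as each reduces to a one-line inequality.

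For the bound $|V(H)|\ge 6$, I would argue by contradiction and suppose $|V(H)|=n\le 5$. Summing degrees (counted with multiplicity) gives $2|E(H)|=\sum_{v\in V(H)}\deg(v)\le n\,\Delta(H)\le nD\le 5D$, hence $|E(H)|\le\frac52 D$. This contradicts the assumption that $H$ is a counterexample, so $n\ge 6$. Note this step uses only $\Delta(H)\le D$, neither the sharing hypothesis nor minimality.

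For connectivity, I would first dispose of isolated vertices: if some vertex $v$ is isolated, then $H-v$ satisfies both hypotheses of Theorem~\ref{thm1} with the same (multi)edge set but strictly smaller $|E|+|V|$, so by minimality $|E(H)|=|E(H-v)|\le\frac52 D$, a contradiction. Hence $H$ has no isolated vertex, and every component contains an edge. Now suppose for contradiction that $H$ is disconnected, and let $H_1,H_2$ be two distinct components carrying edges $e_1\in E(H_1)$ and $e_2\in E(H_2)$. Every edge lying in a component other than $H_1$ fails to share an endpoint with $e_1$, so the sharing hypothesis forces $\sum_{i\ge 2}|E(H_i)|\le D-2$; symmetrically, applying the hypothesis to $e_2$ gives $|E(H_1)|\le D-2$. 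Adding these, $|E(H)|\le 2(D-2)=2D-4<\frac52 D$, again contradicting that $H$ is a counterexample. Therefore $H$ is connected.

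I do not expect a genuine obstacle here: both statements collapse to elementary counting once the minimal-counterexample hypotheses are spelled out. The only point needing a little care is excluding isolated vertices \emph{before} running the component-counting bound, so that each component in play really supplies an edge to which the sharing hypothesis can be applied; this is precisely the spot where the minimality of $H$ — rather than merely its being a counterexample — is invoked.
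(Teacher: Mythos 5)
Your proof is correct and follows essentially the same route as the paper: minimality of $|E(H)|+|V(H)|$ rules out isolated vertices, the edge-sharing hypothesis applied across components forces connectivity, and the degree-sum bound $2|E(H)|\le 5D$ handles $|V(H)|\le 5$. The only cosmetic difference is in the connectivity step, where the paper applies the hypothesis to a single edge in a component containing at most half the edges (getting $\frac12|E(H)|>\frac54D>D-2$ missed edges), while you apply it symmetrically to one edge in each of two components and add the two bounds to get $|E(H)|\le 2(D-2)<\frac52D$; both are the same elementary counting idea.
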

\begin{clmproof}
Since $|E(H)|+|V(H)|$ is minimum, $H$ has no isolated vertices.  If $H$ is
disconnected, then some edge $e$ of $H$ is in a component with at most half of
the edges of $H$.  But now $e$ has no endpoint in common with at least
$\frac12|E(H)|>\frac54D$, a contradiction.  If $|V(H)|\le 5$, then $|E(H)| = 
\frac12 \sum_{v\in V(H)}d(v)\le \frac52D$, so $H$ is not a counterexample.
\end{clmproof}

\begin{clm}
If $vw\in E(H)$, then $d(v)+d(w)-\mu(vw) \ge \frac32D+2$.
\label{lem1}
\end{clm}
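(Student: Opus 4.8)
The plan is a direct double-counting argument, applying the defining hypothesis of Theorem~\ref{thm1} to the single edge $e:=vw$. First I would count the edges of $H$ that share an endpoint with $e$, that is, the edges incident to $v$ or to $w$. Counting with multiplicity, there are $d(v)$ edges incident to $v$ and $d(w)$ incident to $w$, and the only edges counted twice are those incident to \emph{both} $v$ and $w$ --- which are exactly the $\mu(vw)$ parallel copies of $vw$ itself. By inclusion--exclusion, the number of edges incident to $v$ or $w$ is therefore exactly $d(v)+d(w)-\mu(vw)$.

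Next I would pass to the complementary set. The edges sharing \emph{no} endpoint with $e$ are precisely those incident to neither $v$ nor $w$, and there are
\[
  |E(H)|-\bigl(d(v)+d(w)-\mu(vw)\bigr)
\]
of them. Since $e$ itself is incident to $v$ (indeed to both $v$ and $w$), none of these edges is $e$, so they all lie among the ``other'' edges referenced by the hypothesis of Theorem~\ref{thm1}. That hypothesis --- namely that $e$ shares an endpoint with all but at most $D-2$ other edges --- then gives
\[
  |E(H)|-\bigl(d(v)+d(w)-\mu(vw)\bigr)\le D-2.
\]

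Finally I would rearrange this inequality and invoke that $H$ is a counterexample to Theorem~\ref{thm1}, so that $|E(H)|>\frac52 D$:
\[
  d(v)+d(w)-\mu(vw)\ge |E(H)|-D+2>\tfrac52 D-D+2=\tfrac32 D+2,
\]
which is the desired bound. I do not expect a genuine obstacle here; the only point requiring care is the inclusion--exclusion correction $-\mu(vw)$, since in the multigraph $H$ the edges joining $v$ and $w$ may be numerous, whereas in a simple graph this term would collapse to $1$. It is worth noting that this claim does not even use the maximum-degree bound $\Delta(H)\le D$: it rests solely on the edge-sharing hypothesis together with the counterexample assumption $|E(H)|>\frac52 D$.
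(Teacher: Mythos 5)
Your proposal is correct and follows essentially the same argument as the paper: both count the edges meeting $\{v,w\}$ as $d(v)+d(w)-\mu(vw)$ by inclusion--exclusion, bound the complementary set $|E(H)|-\bigl(d(v)+d(w)-\mu(vw)\bigr)$ by $D-2$ via the hypothesis of Theorem~\ref{thm1}, and finish using the counterexample assumption $|E(H)|>\frac52 D$. Your closing observation that the claim never uses $\Delta(H)\le D$ is accurate but does not change the argument.
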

\begin{clmproof}
By definition, 
$D-2\ge 
|E(H)\setminus (E(v)\cup E(w))|=
|E(H)|-(d(v)+d(w)-\mu(vw))$.
Thus, $d(v)+d(w)-\mu(vw)\ge |E(H)|-(D-2)\ge \frac32D+2$.
\end{clmproof}

\begin{clm}
We have $\delta(H_0)\ge 3$.
\label{lem2}
\end{clm}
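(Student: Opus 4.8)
The plan is to assume for contradiction that some vertex $v$ has $\deg_{H_0}(v)\le 2$, i.e.\ $v$ has at most two distinct neighbors in $H$, and to derive a contradiction using only Claim~\ref{lem1} together with the hypothesis $\Delta(H)\le D$. By Claim~\ref{lem0}, $H$ has no isolated vertex, so $\deg_{H_0}(v)\in\{1,2\}$, and I treat these two cases separately. The whole argument is a short degree count, so I do not anticipate a genuine obstacle; the only point requiring care is the bookkeeping $d(v)=\sum_i \mu(vw_i)$, which is valid precisely because every edge at $v$ joins $v$ to one of its (at most two) neighbors.

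First suppose $\deg_{H_0}(v)=1$, with unique neighbor $w$. Then every edge incident to $v$ is one of the $\mu(vw)$ parallel copies of $vw$, so $d(v)=\mu(vw)$. Applying Claim~\ref{lem1} to the edge $vw$ gives $d(w)=d(v)+d(w)-\mu(vw)\ge \frac32 D+2>D$, which contradicts $\Delta(H)\le D$. Hence no vertex has exactly one distinct neighbor.

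Next suppose $\deg_{H_0}(v)=2$, with distinct neighbors $w_1$ and $w_2$. Now $d(v)=\mu(vw_1)+\mu(vw_2)$. Applying Claim~\ref{lem1} to the edge $vw_1$ and using $d(v)-\mu(vw_1)=\mu(vw_2)$ yields $\mu(vw_2)+d(w_1)\ge \frac32 D+2$; since $d(w_1)\le \Delta(H)\le D$, this forces $\mu(vw_2)\ge \frac12 D+2$. By the symmetric argument applied to $vw_2$, we also get $\mu(vw_1)\ge \frac12 D+2$. Adding these bounds gives $d(v)=\mu(vw_1)+\mu(vw_2)\ge D+4>D$, again contradicting $\Delta(H)\le D$. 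Therefore $\deg_{H_0}(v)\ge 3$ for every vertex $v$, which is exactly $\delta(H_0)\ge 3$.
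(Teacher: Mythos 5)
Your proof is correct and takes essentially the same approach as the paper: both split into the cases $d_{H_0}(v)=1$ and $d_{H_0}(v)=2$ and derive a contradiction from Claim~\ref{lem1} together with $\Delta(H)\le D$. The only cosmetic difference is in the degree-2 case, where the paper assumes WLOG $\mu(vw)\le\mu(vx)$ and applies Claim~\ref{lem1} once to the edge of larger multiplicity, while you apply it to both edges and sum the resulting bounds; the underlying counting is identical.
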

\begin{clmproof}
By Claim~\ref{lem0}, $d_{H_0}(v)\ge 1$ for each $v\in V(H_0)$.
Suppose there exists $v\in V(H_0)$ with $d_{H_0}(v)=1$; denote its neighbor by
$w$.  Claim~\ref{lem1} gives $\frac32 D < d(w)+(d(v)-\mu(vw))=d(w)\le D$, a
contradiction.  So suppose instead that $d_{H_0}(v)=2$; denote the neighbors of
$v$ by $w$ and $x$.  By symmetry, assume that $\mu(vw)\le \mu(vx)$, so
$\mu(vw)\le \frac12D$.  Similar to above, $\frac32 D <d(v)+d(x) - \mu(vx) = d(x)+ \mu(vw) \le \frac32D$, a contradiction.  Thus, $\delta(H_0)\ge 3$, as claimed.
\end{clmproof}

\begin{clm}
\label{lem3}
If $v,w,x,y\in V(H)$ are distinct vertices and $vw,xy\in E(H)$, then
$\mu(vx)+\mu(vy)+\mu(wx)+\mu(wy) \ge \frac12D+2$.
\end{clm}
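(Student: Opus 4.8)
The plan is to count edges relative to the two disjoint edges $vw$ and $xy$ and to recognize the target quantity as a set of ``cross edges''. Since $v,w,x,y$ are distinct, the sum $\mu(vx)+\mu(vy)+\mu(wx)+\mu(wy)$ is exactly the number of edges of $H$ (counted with multiplicity) that have one endpoint in $\{v,w\}$ and the other in $\{x,y\}$; no copy of $vw$ or of $xy$ is such an edge. So it suffices to bound this number of cross edges from below by $\frac12D+2$.

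First I would bound from below the number of edges meeting $\{v,w\}$. Let $A:=E(v)\cup E(w)$ be the set of edges incident to $v$ or $w$, so that $|A|=d(v)+d(w)-\mu(vw)$ (the copies of $vw$ are the only edges counted twice). Claim~\ref{lem1}, applied to the edge $vw$, gives $|A|\ge \frac32D+2$.

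Next I would split $A$ according to whether the second endpoint lies in $\{x,y\}$. The edges of $A$ whose other endpoint is in $\{x,y\}$ are precisely the cross edges, while the remaining edges of $A$ are incident to $\{v,w\}$ but disjoint from the edge $xy$ (here again I use that $v,w,x,y$ are distinct, so that $vw$ itself falls into this remaining set). By the defining hypothesis of Theorem~\ref{thm1} applied to the edge $xy$, at most $D-2$ edges of $H$ share no endpoint with $xy$, i.e.\ are disjoint from $xy$; the leftover part of $A$ is contained in this family, so it has size at most $D-2$. Subtracting, the number of cross edges is at least $|A|-(D-2)\ge (\frac32D+2)-(D-2)=\frac12D+4$, which already beats the claimed bound $\frac12D+2$.

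The argument is short, and I do not expect a genuine obstacle; the only place to be careful is the bookkeeping with multiplicities and the repeated appeal to the distinctness of $v,w,x,y$, which guarantees both that the cross edges are exactly $A\cap(E(x)\cup E(y))$ and that $vw$ and $xy$ land in the leftover sets rather than among the cross edges. Notably, this route invokes only Claim~\ref{lem1} and the edge hypothesis of Theorem~\ref{thm1}, so it carries over verbatim to any $H$ meeting those hypotheses.
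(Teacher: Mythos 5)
Your proof is correct and is essentially the paper's argument in slightly different packaging: both identify the sum $\mu(vx)+\mu(vy)+\mu(wx)+\mu(wy)$ as the set of edges meeting both $vw$ and $xy$, and both bound it below by applying the ``at most $D-2$ missed edges'' hypothesis twice (you apply it once directly to $xy$ and once implicitly through Claim~\ref{lem1}, which is itself that hypothesis for $vw$ combined with $|E(H)|\ge\frac52D$, whereas the paper applies it to both edges against the global count $|E(H)|$). Your bookkeeping inside $E(v)\cup E(w)$ even yields the marginally stronger constant $\frac12D+4$, but the underlying counting is the same.
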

\begin{clmproof}
If an edge $e$ shares an endpoint with both $vw$ and $xy$, then $e$ is counted
by $\mu(vx)+\mu(vy)+\mu(wx)+\mu(wy)$.
Each of $vw$ and $xy$ shares an endpoint with all but at most $D-2$ edges
of $H$, so $\mu(vx)+\mu(vy)+\mu(wx)+\mu(wy)\ge |E(H)|-|\{vw,xy\}|-2(D-2)\ge
\frac52 D-(2D-2)=\frac12D+2$.
\end{clmproof}

\begin{clm}
For all distinct $v,w\in V(H)$, we have 
$\sum_{x\in V\setminus\{v,w\}}\mu(vx)>\frac12 D$;
so $\mu(vw)<\frac12D$.
\label{lem4}
\end{clm}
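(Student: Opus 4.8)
The plan is to rewrite the displayed sum and then feed the adjacent pairs into Claim~\ref{lem1}. The key observation is that $\sum_{x\in V\setminus\{v,w\}}\mu(vx)=d(v)-\mu(vw)$, since $d(v)=\sum_{x\ne v}\mu(vx)$ and we have merely dropped the single term $\mu(vw)$. Thus the first assertion is equivalent to $d(v)-\mu(vw)>\frac12D$, and I would actually aim for the slightly stronger bound $d(v)-\mu(vw)\ge\frac12D+2$.

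I would prove this bound by splitting on whether $vw\in E(H)$. If $vw\in E(H)$, then Claim~\ref{lem1} gives $d(v)+d(w)-\mu(vw)\ge\frac32D+2$; since $d(w)\le\Delta(H)\le D$, subtracting $d(w)$ yields $d(v)-\mu(vw)\ge\frac12D+2$ immediately. If instead $vw\notin E(H)$, then $\mu(vw)=0$, so it suffices to show $d(v)\ge\frac12D+2$. For this I would use Claim~\ref{lem2}, which guarantees that $v$ has a neighbor $u$ in $H_0$; applying Claim~\ref{lem1} to the edge $vu$ and using $d(u)\le D$ and $\mu(vu)\ge0$ gives $d(v)\ge\frac32D+2-d(u)+\mu(vu)\ge\frac12D+2$, as needed. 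In both cases $d(v)-\mu(vw)\ge\frac12D+2>\frac12D$, which proves the first assertion.

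The second assertion then follows by rearranging: $\mu(vw)=d(v)-\sum_{x\in V\setminus\{v,w\}}\mu(vx)\le D-\left(\frac12D+2\right)<\frac12D$, where I use $d(v)\le D$. There is no serious obstacle here; the only point requiring care is the non-adjacent case $\mu(vw)=0$, in which Claim~\ref{lem1} cannot be invoked on the non-edge $vw$ directly and one must instead route through an honest neighbor $u$ of $v$. This is exactly where the lower bound $\delta(H_0)\ge3$ from Claim~\ref{lem2} (connectivity from Claim~\ref{lem0} would already suffice) earns its keep.
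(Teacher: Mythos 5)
Your proof is correct and follows essentially the same route as the paper: rewrite the sum as $d(v)-\mu(vw)$, split on whether $vw\in E(H)$, apply Claim~\ref{lem1} directly in the adjacent case, and route through an actual incident edge of $v$ in the non-adjacent case. The only cosmetic differences are that the paper finds that edge via Claim~\ref{lem0} (no isolated vertices) rather than Claim~\ref{lem2}---as you yourself note would suffice---and your direct bound $d(v)\ge\frac12D+2$ when $\mu(vw)=0$ is a slightly cleaner way of phrasing the paper's sum comparison.
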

\begin{clmproof}
First suppose that $vw\in E(H)$.
By Claim~\ref{lem1}, we have $\frac32D<d(w)+d(v)-\mu(vw) = 
d(w)+(\sum_{x\in V\setminus\{v\}}\mu(vx))-\mu(vw) =
d(w)+\sum_{x\in V\setminus\{v,w\}}\mu(vx) \le
D+\sum_{x\in V\setminus\{v,w\}}\mu(vx)$.  Thus, $\frac12D < 
\sum_{x\in V\setminus\{v,w\}}\mu(vx)$, as claimed. So $\mu(vw)<D-\frac12D$. 
Suppose instead that $vw\notin E(H)$.
By Claim~\ref{lem0}, $v$ has some incident edge $vz$.  As above,
$\sum_{x\in V\setminus\{v,z\}}\mu(vx)>\frac12D$.  Now $\mu(vw)=0$, so
$\sum_{x\in V\setminus\{v,w\}}\mu(vx)>\sum_{x\in V\setminus\{v,w,z\}}\mu(vx) =
\sum_{x\in V\setminus\{v,z\}}\mu(vx) > \frac12D$, as claimed.
\end{clmproof}

\begin{clm}
The graph $\overline{H_0}$ is a vertex disjoint union of stars. 
\label{lem5}
\end{clm}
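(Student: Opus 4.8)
The plan is to use the characterization that a graph is a vertex disjoint union of stars if and only if every one of its edges has an endpoint of degree at most $1$. So I would assume $\overline{H_0}$ is not such a union, which produces an edge $uv$ of $\overline{H_0}$ both of whose endpoints have degree at least $2$ in $\overline{H_0}$; equivalently, $\mu(uv)=0$ in $H$, and I may pick a further $H$-non-neighbor $u'$ of $u$ (with $u'\ne v$) and a further $H$-non-neighbor $v'$ of $v$ (with $v'\ne u$). This records the non-edges $\mu(uu')=\mu(uv)=\mu(vv')=0$. I would first dispose of every configuration that exhibits a triangle of $\overline{H_0}$ (three vertices pairwise non-adjacent in $H$): namely $u'=v'$, or $\mu(u'v)=0$ (giving the triangle $\{u,u',v\}$), or $\mu(uv')=0$ (giving $\{u,v,v'\}$). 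In the only remaining case, $u',u,v,v'$ are distinct and $u'v,uv'\in E(H)$.

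In that remaining case I would invoke Claim~\ref{lem3}. Since $u',v\notin N(u)$, each edge $uz$ with $z\in N(u)$ is disjoint from $u'v$, so Claim~\ref{lem3} applied to $\{uz,u'v\}$ gives $\mu(uv)+\mu(u'u)+\mu(vz)+\mu(u'z)\ge\frac12D+2$; as $\mu(uv)=\mu(u'u)=0$, this is $\mu(vz)+\mu(u'z)\ge\frac12D+2$. If some $z\in N(u)$ had $\mu(vz)=0$, then $\mu(u'z)\ge\frac12D+2>\frac12D$, contradicting Claim~\ref{lem4}; hence $N(u)\subseteq N(v)$. The symmetric argument with the edge $uv'$ gives $N(v)\subseteq N(u)$, so $N(u)=N(v)$. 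But $u'\in N(v)$ (as $\mu(u'v)>0$) while $u'\notin N(u)$, a contradiction. The crucial feature here is that the two $H$-non-neighbors of $u$ are themselves joined by an edge of $H$, which is exactly what feeds Claim~\ref{lem3}.

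The triangle case is where this mechanism breaks, since the two non-neighbors of $u$ are mutually non-adjacent and no edge is available for Claim~\ref{lem3}; I expect this to be the main obstacle, and I would instead argue by edge-counting. Writing $\{u,v,w\}$ for the triangle and $Z:=V(H)\setminus\{u,v,w\}$, every edge of $H$ lies in exactly one of $E(u),E(v),E(w),E(H[Z])$, so for each $z\in N(u)$ the edges meeting neither $u$ nor $z$ number exactly $d(v)+d(w)+m-\mu(vz)-\mu(wz)-d_Z(z)$, where $m:=|E(H[Z])|$ and $d_Z$ denotes degree within $Z$; by the edge-sharing hypothesis this is at most $D-2$. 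Summing over the $d_{H_0}(u)\ge3$ neighbors $z$ of $u$ (Claim~\ref{lem2}) and using $\sum_{z\in N(u)}\mu(vz)\le d(v)$, $\sum_{z\in N(u)}\mu(wz)\le d(w)$, and $\sum_{z\in N(u)}d_Z(z)\le 2m$, a short manipulation yields $2(d(v)+d(w))+m\le 3D-6$. Adding this and its two cyclic analogues gives $4\sigma+3m\le 9D-18$ with $\sigma:=d(u)+d(v)+d(w)$. Finally $|E(H)|=\sigma+m>\frac52D$ forces $3\sigma+3m>\frac{15}{2}D$, and subtracting gives $\sigma<\frac32D-18$; but Claim~\ref{lem4} gives $d(u),d(v),d(w)>\frac12D$, so $\sigma>\frac32D$, a contradiction. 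I anticipate the fiddliest point to be deriving $2(d(v)+d(w))+m\le 3D-6$ for general $d_{H_0}(u)$ rather than only $d_{H_0}(u)=3$, though this reduces to the elementary inequality $(k-3)(P+2m)\ge0$ with $k=d_{H_0}(u)$ and $P=d(v)+d(w)$.
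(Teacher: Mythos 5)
Your proof is correct, but it takes a genuinely different route from the paper's, most notably in the triangle case. Structurally, the paper excludes first triangles and then $4$-vertex paths in $\overline{H_0}$, while you use the leaf-edge characterization of star forests; these are equivalent, and your residual configuration (distinct $u',u,v,v'$ with $\mu(uu')=\mu(uv)=\mu(vv')=0$ and $u'v,uv'\in E(H)$) is exactly the paper's $P_4$ case. There, both arguments run on Claims~\ref{lem3} and~\ref{lem4}, but the paper is more direct: applying Claim~\ref{lem3} to the two diagonal edges $u'v$ and $uv'$ immediately gives $\mu(u'v')\ge\frac12D+2$, contradicting $\mu(u'v')<\frac12D$; your detour through $N(u)\subseteq N(v)$ and $N(v)\subseteq N(u)$ is valid but roundabout (indeed, taking $z=v'$ in your own argument reproduces the one-line contradiction). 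The substantive difference is the triangle case. The paper's argument is a short disjointness count: for a triangle $vwx$ of $\overline{H_0}$ and any $vy\in E(H_0)$, Claim~\ref{lem4} gives two sets of more than $\frac12D$ edges each (at $w$ and at $x$, avoiding $y$), disjoint because $\mu(wx)=0$ and avoiding $v$ because $\mu(vw)=\mu(vx)=0$, so more than $D$ edges miss $vy$, contradicting the hypothesis. You instead sum the hypothesis inequality over the $k=d_{H_0}(u)\ge3$ neighbors of one triangle vertex, reduce via $(k-3)(P+2m)\ge0$ to $2(d(v)+d(w))+m\le 3D-6$, add the three cyclic versions to get $4\sigma+3m\le 9D-18$, and play this against $|E(H)|=\sigma+m>\frac52D$ and $d(\cdot)>\frac12D$ from Claim~\ref{lem4}; I checked these manipulations and they are sound. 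Your route costs more machinery---it needs the full strength $\delta(H_0)\ge3$ of Claim~\ref{lem2} and the minimal-counterexample bound $|E(H)|>\frac52D$, whereas the paper's triangle step needs only one $H_0$-neighbor and never invokes $|E(H)|>\frac52D$ directly---but it buys a purely arithmetic, global counting argument in place of the paper's ad hoc disjointness observation.
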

\begin{clmproof}
First suppose, to the contrary, that $\overline{H_0}$ contains a triangle $vwx$;
see Figure~\ref{lem5-fig}(a).
That is, $vw,vx,wx\notin E(H_0)$.  Since $\delta(H_0)\ge 3$, there exist $y$ 
such that $vy\in E(H_0)$.  By Claim~\ref{lem4}, we have 
$\sum_{z\in V\setminus\{w,y\}}\mu(wz)>\frac12 D$
and $\sum_{z\in V\setminus\{x,y\}}\mu(xz)>\frac12 D$.  Since $\mu(wx)=0$, the
sets of edges counted by these two inequalities are disjoint.  Thus, there exist
more than $D$ edges that share no endpoint with edge $vy$.  This contradicts our
definition of $H$ (in the hypothesis of the theorem).  Thus, $\overline{H_0}$ is
triangle-free.

Now suppose instead that $\overline{H_0}$ contains edges $vw,wx,xy$; see
Figure~\ref{lem5-fig}(b).  
By the previous paragraph, $vx,wy\notin E(\overline{H_0})$.  
Now Claims~\ref{lem3} and~\ref{lem4} give $\frac12D+2\le
\mu(vw)+\mu(wx)+\mu(xy)+\mu(vy)=\mu(vy)<\frac12D$, a contradiction.
This finishes the proof of the claim.
\end{clmproof}

\begin{figure}[!h]
\centering
\begin{tikzpicture}[scale=1.1, yscale=.6, thick]
\tikzstyle{uStyle}=[shape = circle, minimum size = 5.5pt, inner sep = 0pt,
outer sep = 0pt, draw, fill=white, semithick]
\tikzstyle{lStyle}=[shape = circle, minimum size = 4.5pt, inner sep = 0pt,
outer sep = 0pt, draw, fill=none, draw=none]
\tikzset{every node/.style=uStyle}
\def\off{3.5mm}
\def\myRad{5.5mm}

\draw (0,0) node (v) {} ++(.3,-2) node (w) {} ++(1.4,0) node (x) {} ++(.3,2) node (y) {};

\draw[dash pattern=on 2pt off 2pt] (v) -- (w) -- (x) -- (v);
\draw (v) -- (y);

\draw (v) ++(-\off,0) node[lStyle] {\footnotesize{$v$}};
\draw (w) ++(-\off,0) node[lStyle] {\footnotesize{$w$}};
\draw (x) ++(\off,0) node[lStyle] {\footnotesize{$x$}};
\draw (y) ++(\off,0) node[lStyle] {\footnotesize{$y$}};

\draw (w) --++(245:\myRad) arc (245:295:\myRad) -- (w);
\draw (x) --++(245:\myRad) arc (245:295:\myRad) -- (x);

\draw (1,-3.5) node[lStyle] {\footnotesize{(a)}};

\begin{scope}[xshift=2.15in]
\draw (0,0) node (v) {} ++(.3,-2) node (w) {} ++(1.4,0) node (x) {} ++(.3,2) node (y) {};

\draw[dash pattern=on 2pt off 2pt] (v) -- (w) -- (x) -- (y);
\draw (x) -- (v) (y) -- (w);

\draw (v) ++(-\off,0) node[lStyle] {\footnotesize{$v$}};
\draw (w) ++(-\off,0) node[lStyle] {\footnotesize{$w$}};
\draw (x) ++(\off,0) node[lStyle] {\footnotesize{$x$}};
\draw (y) ++(\off,0) node[lStyle] {\footnotesize{$y$}};

\draw (w) --++(245:\myRad) arc (245:295:\myRad) -- (w);
\draw (x) --++(245:\myRad) arc (245:295:\myRad) -- (x);

\draw (1,-3.5) node[lStyle] {\footnotesize{(b)}};

\end{scope}

\end{tikzpicture}
\caption{The proof of Claim~\ref{lem5}: (a) a triangle $vwx$ in $\overline{H_0}$,
and (b) a $P_4$ $vwxy$ in $\overline{H_0}$.\label{lem5-fig}}
\end{figure}
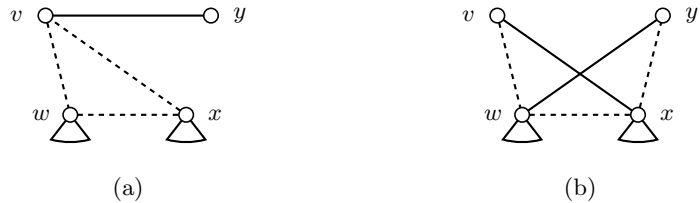

\begin{clm}
The graph $\overline{H_0}$ is a matching.
\label{lem6}
\end{clm}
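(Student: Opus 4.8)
The plan is to invoke Claim~\ref{lem5}: since $\overline{H_0}$ is a vertex-disjoint union of stars, it fails to be a matching precisely when some star has a center $w$ incident to at least two edges, i.e. with two leaves. So I would suppose, for contradiction, that $v$ and $x$ are two leaves of a common star centered at $w$. This immediately records the structure $\mu(vw)=\mu(xw)=0$ (each leaf is joined to the center in $\overline{H_0}$), while $vx\in E(H)$, since two distinct leaves of one star are non-adjacent in $\overline{H_0}$ and hence adjacent in $H_0$. Thus $v$--$w$--$x$ is a path in $\overline{H_0}$.

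The tempting move is to imitate the $P_4$ argument inside the proof of Claim~\ref{lem5}, where two ``diagonal'' edges let one apply Claim~\ref{lem3} and contradict Claim~\ref{lem4}. The main obstacle is that the path $v$--$w$--$x$ supplies only the single diagonal $vx$, so a verbatim repetition is unavailable. The key idea to get around this is to spend one of the center's edges as a ``target'' and to use two further neighbors of the center to overload that target. Concretely, Claim~\ref{lem2} gives $\delta(H_0)\ge 3$, so $w$ has three distinct neighbors $y_1,y_2,y_3$ in $H_0$, none of which lies in $\{v,x\}$ (as $v,x$ are the non-neighbors of $w$).

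First I would apply Claim~\ref{lem3} to the two disjoint edges $vx$ and $wy_j$ (the four vertices $v,x,w,y_j$ are distinct) for each $j\in\{2,3\}$; after using $\mu(vw)=\mu(xw)=0$ this yields $\mu(vy_j)+\mu(xy_j)\ge \frac12 D+2$. Then I would observe that every edge counted by $\mu(vy_2)+\mu(xy_2)+\mu(vy_3)+\mu(xy_3)$ has both endpoints in $\{v,x,y_2,y_3\}$, and so shares no endpoint with $wy_1$. Summing the two inequalities shows that at least $(\tfrac12 D+2)+(\tfrac12 D+2)=D+4$ edges of $H$ are disjoint from $wy_1$, which exceeds the permitted $D-2$ and contradicts the hypothesis of Theorem~\ref{thm1}. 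This contradiction finishes the proof.

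I expect the only real subtlety to be bookkeeping: verifying that $y_2,y_3\neq y_1$ and that $y_1,y_2,y_3\notin\{v,x\}$, so that the four edge-classes $vy_2,xy_2,vy_3,xy_3$ are genuinely disjoint from $wy_1$ and are counted with their full multiplicities among the edges avoiding it. Once this is in place the overload is immediate, and no delicate degree-sum optimization (nor any appeal to $|V(H)|\ge 6$ or to the precise degree of $w$) is needed.
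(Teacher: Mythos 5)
Your proof is correct and is essentially the paper's own argument, merely with relabeled vertices: both identify the center of a star in $\overline{H_0}$ with two leaves, apply Claim~\ref{lem3} twice to the leaf--leaf edge paired with two center--neighbor edges (using $\delta(H_0)\ge 3$ from Claim~\ref{lem2}), and then overload a third center--neighbor edge to contradict the hypothesis of Theorem~\ref{thm1}. The bookkeeping you flag (distinctness of the $y_j$ from $v,x,w$ and from each other) checks out exactly as in the paper.
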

\begin{clmproof}
Suppose, to the contrary, that there exist distinct $v,w,x\in V(H_0)$
with $vw,vx\in E(\overline{H_0})$; that is, $vw,vx\notin E(H_0)$.  
Since $\delta(H_0)\ge 3$, there exists $y$ such that $vy\in
E(H_0)$.  By Claim~\ref{lem5}, we know $wx, wy, xy\in E(H_0)$;  
see Figure~\ref{lem67-fig}(a). 
Thus, Claim~\ref{lem3} gives $\mu(vw)+\mu(vx)+\mu(yw)+\mu(yx)>\frac12D$.
However, $\mu(vw)=\mu(vx)=0$, so $\mu(yw)+\mu(yx)>\frac12D$.
Since, $\delta(H_0)\ge 3$, there exist $y',z$ such that $y,y',z$ are distinct
neighbors of $v$ in $H_0$.  Repeating the arguments above for $y'$, we get that
$\mu(y'w)+\mu(y'x)>\frac12D$.  But now $vz$ has more than $D$ edges with which
it shares no endpoint (those counted by $\mu(yw)+\mu(yx)+\mu(y'w)+\mu(y'x)$),
a contradiction.
\end{clmproof}

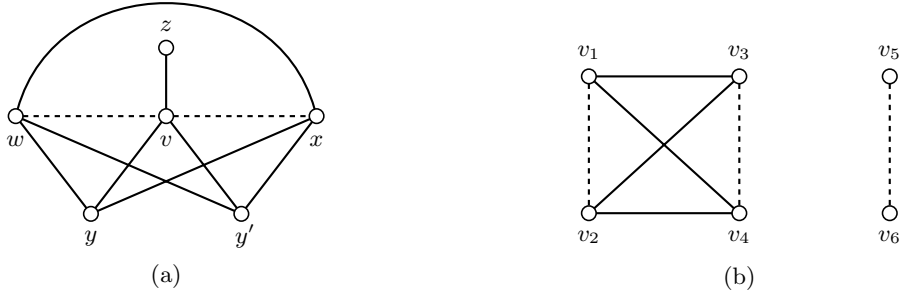
\begin{figure}[!h]
\centering
\begin{tikzpicture}[thick, yscale=1.3]
\tikzstyle{uStyle}=[shape = circle, minimum size = 5.5pt, inner sep = 0pt,
outer sep = 0pt, draw, fill=white, semithick]
\tikzstyle{lStyle}=[shape = circle, minimum size = 4.5pt, inner sep = 0pt,
outer sep = 0pt, draw, fill=none, draw=none]
\tikzset{every node/.style=uStyle}
\def\off{2.5mm}
\def\myRad{5.5mm}

\draw 
(0,0) node (v) {} 
(-2,0) node (w) {}
(2,0) node (x) {}
(-1,-1) node (y) {}
(1,-1) node (y') {}
(0,.7) node (z) {};

\draw (w) edge[bend left=70] (x);

\foreach \x in {v,w,x,y}
\draw (\x) ++ (0,-\off) node[lStyle] {\footnotesize{$\x$}};

\draw (y') ++ (0.5mm, -.9*\off) node[lStyle] {\footnotesize{$y'$}};
\draw (z) ++ (0, 0.9*\off) node[lStyle] {\footnotesize{$z$}};

\draw[dash pattern=on 2pt off 2pt] (w) -- (v) -- (x);

\draw (y) -- (v) -- (y') -- (x) -- (y) -- (w) -- (y') (v) -- (z);

\draw (0,-1.635) node[lStyle] {\footnotesize{(a)}};

\begin{scope}[xshift=3in, yscale=.7, yshift=-.165in]
\def\off{3.5mm}
\foreach \i/\x/\y in {1/-2/1, 2/-2/-1, 3/0/1, 4/0/-1, 5/2/1, 6/2/-1}
{
\draw (\x,\y) node (v\i) {};
\draw (v\i) ++ (0,\y*\off) node[lStyle] {\footnotesize{$v_{\i}$}};
}

\draw[dash pattern=on 2pt off 2pt] (v1) -- (v2) (v3) -- (v4) (v5) -- (v6);
\draw (v1) -- (v4) -- (v2) -- (v3) -- (v1);

\draw (0,-1.95) node[lStyle] {\footnotesize{(b)}};
\end{scope}
\end{tikzpicture}
\caption{The proofs of (a) Claim~\ref{lem6} and (b)
Claim~\ref{lem7}.\label{lem67-fig}}
\end{figure}

\begin{clm}
The graph $\overline{H_0}$ contains at most one edge.
\label{lem7}
\end{clm}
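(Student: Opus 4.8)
The plan is to assume, for contradiction, that $\overline{H_0}$ has at least two edges. By Claim~\ref{lem6} these edges form a matching, so I may pick two of them, say $v_1v_2$ and $v_3v_4$, whose four endpoints are distinct; since $\overline{H_0}$ is a matching, the four ``cross'' pairs $v_1v_3, v_1v_4, v_2v_3, v_2v_4$ all lie in $E(H)$. Applying Claim~\ref{lem3} to the disjoint edge pair $(v_1v_3, v_2v_4)$ and again to $(v_1v_4, v_2v_3)$, and using $\mu(v_1v_2)=\mu(v_3v_4)=0$, I get $\mu(v_1v_4)+\mu(v_2v_3)\ge \frac12D+2$ and $\mu(v_1v_3)+\mu(v_2v_4)\ge \frac12D+2$. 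Summing, the total cross-multiplicity $P:=\mu(v_1v_3)+\mu(v_1v_4)+\mu(v_2v_3)+\mu(v_2v_4)$ satisfies $P\ge D+4$.

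Next I would split into two cases according to whether $W:=V(H)\setminus\{v_1,v_2,v_3,v_4\}$ contains an edge of $H$. If some edge $xy$ has both endpoints in $W$, then $xy$ shares no endpoint with any of the $P\ge D+4$ edges counted above (all of which lie inside $\{v_1,v_2,v_3,v_4\}$); since $D+4>D-2$, this contradicts the hypothesis that every edge of $H$ meets all but at most $D-2$ others. Hence I may assume $W$ is independent in $H$. By Claim~\ref{lem0} we have $|V(H)|\ge 6$, so $|W|\ge 2$; but independence of $W$ means every pair inside $W$ is a non-edge, and since $\overline{H_0}$ is a matching this forces $|W|\le 2$. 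Therefore $|W|=2$, say $W=\{e,f\}$, the graph has exactly six vertices, and $\overline{H_0}=\{v_1v_2,\,v_3v_4,\,ef\}$ is a perfect matching; equivalently $H_0=K_{2,2,2}$ with parts $\{v_1,v_2\}$, $\{v_3,v_4\}$, $\{e,f\}$.

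To finish this last case I would exploit the full three-fold symmetry: the computation of the first paragraph applies verbatim to each of the three pairs of parts, since each part is a non-edge, giving that the total edge-multiplicity between any two parts is at least $D+4$. As every edge of $K_{2,2,2}$ runs between two distinct parts, summing over the three pairs yields $|E(H)|\ge 3(D+4)=3D+12$. On the other hand $H$ has six vertices and $\Delta(H)\le D$, so $2|E(H)|=\sum_{v}d(v)\le 6D$ and thus $|E(H)|\le 3D$, a contradiction. I expect the main obstacle to be precisely this octahedron case: a single pair of non-edges only forces $P\ge D+4$, which is not by itself enough, so one must first pin down that the absence of an edge inside $W$ collapses $H$ to $K_{2,2,2}$ on exactly six vertices, and only then play all three non-edges simultaneously against the trivial degree bound.
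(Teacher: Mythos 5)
Your proof is correct, but it is organized differently from the paper's. The paper splits on the number of edges of $\overline{H_0}$: with at least three non-edges $v_1v_2,v_3v_4,v_5v_6$ it pairs up the 12 edges of $H_0$ induced by these six vertices into six Claim~\ref{lem3} pairs to get $|E(H)|>3D$, and contradicts this with the \emph{hypothesis-based} upper bound $|E(H)|\le d(v_1)+d(v_3)+(D-2)<3D$ (note this needs no control on $|V(H)|$); with exactly two non-edges it finds an edge of $H_0$ avoiding $\{v_1,\dots,v_4\}$ and contradicts the hypothesis directly. You instead split on whether some edge of $H$ avoids the four endpoints of two chosen non-edges. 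Your first case subsumes and generalizes the paper's ``exactly two'' case. Your second case is new in form: you show the absence of such an edge, together with Claims~\ref{lem0} and~\ref{lem6}, forces $|V(H)|=6$ and $H_0=K_{2,2,2}$, and you then run the same Claim~\ref{lem3} pairing (three pairs of parts, each of total multiplicity at least $D+4$) against the \emph{trivial} degree-sum bound $|E(H)|\le \frac12\cdot 6D=3D$. So the core counting is the same ($3D+12$ versus roughly $3D$), but the case decomposition cuts differently: your route explicitly isolates the octahedron as the critical configuration and kills it with the weakest available upper bound, at the cost of first having to pin down $|V(H)|=6$; the paper's route never needs to identify the extremal structure because its hypothesis-based upper bound works for any number of vertices. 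Both arguments are complete and of comparable length.
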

\begin{clmproof}
First suppose instead that $\overline{H_0}$ has at least 3 edges;
that is, by Claim~\ref{lem6}, there are $v_1v_2, v_3v_4, v_5v_6 \notin E(H_0)$.
See Figure~\ref{lem67-fig}(b).
By Claim~\ref{lem6}, we know $v_1v_3, v_2v_4\in E(H_0)$.
So Claim~\ref{lem3} implies that $\mu(v_2v_3)+\mu(v_1v_4)>\frac12D$.
Similarly, $\mu(v_1v_3)+\mu(v_2v_4)>\frac12D$.  We can now repeat both of these
arguments twice, first with $v_5v_6$ in place of $v_3v_4$, and second with
$v_5v_6$ in place of $v_1v_2$.  Thus, we have partitioned the 12 edges induced
(in $H_0$) by $v_1,\ldots,v_6$ into 6 pairs, each of which has sum of
multiplicities greater than $\frac12D$.  Thus, $|E(H)|>6(\frac12D)=3D$. 
However, applying the hypothesis of the theorem to edge $v_1v_3$ gives
$|E(H)|\le d(v_1)+d(v_3)+(D-2)<3D$, a contradiction.

Now suppose instead that $\overline{H_0}$ has only two edges.  That is,
$v_1v_2,v_3v_4\notin E(H_0)$.  As above, we conclude
that $\{v_1,v_2,v_3,v_4\}$ induces (in $H$) more than $D$ edges.
Now there exists $v_5v_6\in E(H_0)$ (with $v_5,v_6\notin \{v_1,v_2,v_3,v_4\}$;
recall that $|V(H)|\ge 6$, by Claim~\ref{lem0}).  But now there exist more than
$D$ edges with which $v_5v_6$ shares no endpoint, a contradiction.
\end{clmproof}

\begin{clm}
The graph $\overline{H_0}$ contains exactly one edge. Further, $|V(H_0)|=6$.
\label{lem8}
\end{clm}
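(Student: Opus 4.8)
The plan is to close the argument with a single global double count, applied to the hypothesis of the theorem in its sharp local form: for every edge $vw$, the edges of $H$ sharing no endpoint with $vw$ are exactly those with both endpoints in $V\setminus\{v,w\}$, and there are at most $D-2$ of them. By Claim~\ref{lem7}, $\overline{H_0}$ has at most one edge, so $H_0$ is either complete or complete minus one edge. I would dispose of the first possibility entirely and pin the order to $6$ in the second, so that Claim~\ref{lem7} then forces \emph{exactly} one edge.

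First I would rule out $H_0=K_h$, where $h:=|V(H)|$. Summing the inequality ``$\le D-2$'' over all $\binom{h}{2}$ pairs $\{v,w\}$ (each of which is an edge when $H_0$ is complete), every edge $xy$ of $H$ is counted, with its multiplicity, once for each pair disjoint from $\{x,y\}$, i.e.\ $\binom{h-2}{2}$ times. This gives $\binom{h-2}{2}|E(H)|\le\binom{h}{2}(D-2)$, hence $|E(H)|\le\frac{h(h-1)}{(h-2)(h-3)}(D-2)$. The coefficient $\frac{h(h-1)}{(h-2)(h-3)}$ is decreasing in $h$ and equals exactly $\frac52$ at $h=6$, so (recalling $h\ge 6$ by Claim~\ref{lem0}) we obtain $|E(H)|\le\frac52(D-2)<\frac52 D$, contradicting that $H$ is a counterexample. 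Thus $\overline{H_0}$ has exactly one edge, say $v_1v_2$.

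Next, with $\overline{H_0}=\{v_1v_2\}$, I would rerun the same count over the edges of $H_0$, i.e.\ over all pairs except $\{v_1,v_2\}$. The only change is that a pair $\{x,y\}$ disjoint from $\{v_1,v_2\}$ now loses exactly one of its $\binom{h-2}{2}$ witnessing pairs (namely the non-edge $\{v_1,v_2\}$), so the left-hand side becomes $\binom{h-2}{2}|E(H)|-|E(H[V\setminus\{v_1,v_2\}])|$. Since $\mu(v_1v_2)=0$ we have $|E(H[V\setminus\{v_1,v_2\}])|=|E(H)|-d(v_1)-d(v_2)$, and Claim~\ref{lem4} gives $d(v_1),d(v_2)>\tfrac12 D$. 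Substituting and rearranging yields
\[
|E(H)|<\frac{\left(\binom{h}{2}-1\right)(D-2)-D}{\binom{h-2}{2}-1}.
\]
A short computation shows this right-hand side is less than $\frac52 D$ for every $h\ge 7$; indeed the bound rearranges to $D\bigl(-\tfrac34 h^2+\tfrac{23}4 h-7\bigr)<2\binom{h}{2}-2$, whose left side is negative once $h\ge 7$. This again contradicts that $H$ is a counterexample, so $h\le 6$, and with Claim~\ref{lem0} we conclude $|V(H_0)|=h=6$.

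The main obstacle I anticipate is precisely this last case. The crude double count alone cannot succeed at $h=6$, and it should not: $K_6$ with each edge of multiplicity $\tfrac16(D-2)$ realizes $\tfrac52 D-5$ edges, so $h=6$ must genuinely survive. The argument therefore has to borrow the extra slack $d(v_1)+d(v_2)>D$ from Claim~\ref{lem4} to separate the surviving order $h=6$ from all larger orders. Tuning the constants so that $h=7$ is excluded while $h=6$ is retained is the delicate point; the remainder is bookkeeping.
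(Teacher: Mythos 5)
Your proof is correct and takes essentially the same route as the paper: sum the defining inequality over the edges of $H_0$, first when $H_0$ is complete (your count of disjoint edges is exactly complementary to the paper's count of incident edges, and yields the identical bound $|E(H)|\le\frac{h(h-1)}{(h-2)(h-3)}(D-2)\le\frac52(D-2)$ for $h\ge 6$), then when $H_0$ is $K_h$ minus one edge, where monotonicity in $h$ excludes $h\ge 7$ while $h=6$ genuinely survives. The only divergence is in the second count: the paper bounds every edge's incidence count crudely by $2h-3$ and needs no degree information, getting $|E(H)|\le\frac{(h-2)(h+1)}{(h-1)(h-4)}(D-2)$, which at $h=7$ is $\frac{20}{9}(D-2)<\frac52D$; you instead keep the count exact and import $d(v_1)+d(v_2)>D$ from Claim~\ref{lem4}, which gives a slightly stronger bound but is not actually needed to separate $h=7$ from $h=6$.
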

\begin{clmproof}
Suppose to the contrary, by Claim~\ref{lem7}, that $\overline{H_0}$ contains no
edges; that is $H_0$ is a complete graph.  Let \Emph{$h$}$:=|V(H_0)|$.  
By assumption, every edge $vw\in E(H)$ shares an endpoint with all but at most
$D-2$ edges.  Thus, for every pair $v,w\in V(H)$, 
we have $d_H(v)+d_H(w)-\mu(vw)
\ge |E(H)|-(D-2)$.  Summing over all $\binom{h}2$ edges of $H_0$ gives
$$
\sum_{vw\in E(H_0)}\left(d_H(v)+d_H(w)-\mu(vw)\right)\ge \sum_{vw\in
E(H_0)}\left(|E(H)|-(D-2)\right)
$$
Each edge of $H$ is counted on the left exactly $2h-3$ times.  Thus, we get
$(2h-3)|E(H)| \ge \binom{h}2|E(H)|-\binom{h}2(D-2)$.
This simplifies to $|E(H)|\le (D-2)\frac{h(h-1)}{(h-2)(h-3)} =
(D-2)(1+\frac{4h-6}{(h-2)(h-3)}) = (D-2)(1+\frac{6}{h-3}+\frac{-2}{h-2})$.
This expression is decreasing with $h$, so is maximized when $h=6$, where we get
$|E(H)|\le (D-2)(1+\frac{6}3+\frac{-2}4) = (D-2)\frac52$.  Thus,
$\overline{H_0}$ contains exactly one edge, as claimed.

Let $vw$ be the unique edge in $\overline{H_0}$.
Now we repeat the argument above.  The main difference is that 
the sum on the right now has only $\binom{h}2-1$ terms.
There are also some edges that are counted only $2h-4$ times, but we can still
upper bound the left side by $(2h-3)|E(H)|$.  That is $(2h-3)|E(H)|\ge
(\binom{h}2-1)|E(H)|-(\binom{h}2-1)(D-2)$.  This simplifies to $|E(H)|\le
(D-2)\frac{h^2-h-2}{(h-1)(h-4)} = (D-2)(1+\frac{4h-6}{(h-1)(h-4)}) =
(D-2)(1+\frac{2/3}{h-1}+\frac{10/3}{h-4})$.  Again this expression is decreasing
with $h$.  When $h=6$, we get $|E(H)|\le \frac{14}5(D-2)$, so we cannot rule out
that case.  But when $h=7$, we get $|E(H)|\le \frac{20}9(D-2)$.  Thus $h=6$, as claimed.
\end{clmproof}

\begin{figure}[!h]
\centering
\begin{tikzpicture}[thick, scale=1.35]
\tikzstyle{uStyle}=[shape = circle, minimum size = 5.5pt, inner sep = 0pt,
outer sep = 0pt, draw, fill=white, semithick]
\tikzstyle{lStyle}=[shape = circle, minimum size = 4.5pt, inner sep = 0pt,
outer sep = 0pt, draw, fill=none, draw=none]
\tikzset{every node/.style=uStyle}
\def\off{3.5mm}
\def\myRad{5.5mm}

\foreach \i in {0,...,5}
\draw (60*\i:1cm) node (v\i) {};

\foreach \i/\j in {0/1, 0/2, 0/3, 0/4, 0/5, 1/3, 1/4, 1/5, 2/3, 2/4, 2/5, 3/4,
3/5, 4/5}
\draw (v\i) -- (v\j);

\draw (v2) ++ (-\off,0) node[lStyle] {\footnotesize{$x$}};
\draw (v1) ++ (\off,0) node[lStyle] {\footnotesize{$y$}};
\draw (0,-1.25) node[lStyle] {\footnotesize{$H_0$}};

\begin{scope}[xshift=1.6in]
\foreach \i in {0,...,5}
\draw (60*\i:1cm) node (v\i) {};

\foreach \i/\j in {0/3, 0/4, 0/5, 3/4, 3/5, 4/5}
\draw (v\i) edge[semithick] (v\j);

\foreach \i/\j in {0/1, 0/2, 1/3, 1/4, 1/5, 2/3, 2/4, 2/5}
{
\draw (v\i) edge[thin, bend left=5] (v\j);
\draw (v\j) edge[thin, bend left=5] (v\i);
}

\draw (v2) ++ (-\off,0) node[lStyle] {\footnotesize{$x$}};
\draw (v1) ++ (\off,0) node[lStyle] {\footnotesize{$y$}};
\draw (0,-1.25) node[lStyle] {\footnotesize{$H_1$}};
\end{scope}

\end{tikzpicture}
\caption{The graphs $H_0$ and $H_1$ in the proof of
Claim~\ref{lem9}.\label{lem9-fig}}
\end{figure}
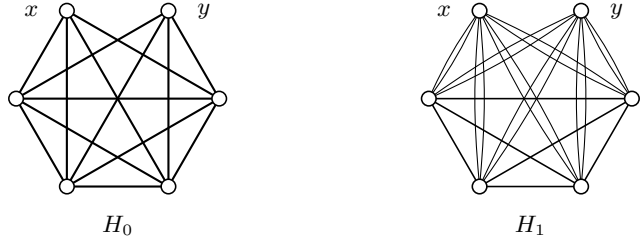

\begin{clm}
The graph $H$ does not exist; that is, Theorem~\ref{thm1} is true.
\label{lem9}
\end{clm}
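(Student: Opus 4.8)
The plan is to rule out the final configuration and thereby derive the contradiction that completes the proof. By Claim~\ref{lem8}, I may assume $H_0$ is $K_6$ with exactly one edge removed; write $xy$ for the missing edge (so $\mu(xy)=0$) and let $a,b,c,d$ be the remaining four vertices. The $14$ edges of $H_0$ split into two classes: the six \emph{inside} edges among $\{a,b,c,d\}$ and the eight \emph{cross} edges joining $\{x,y\}$ to $\{a,b,c,d\}$. Writing $m$ for the total multiplicity of the inside edges, and using that $x,y$ are nonadjacent so that the cross edges account for exactly $d(x)+d(y)$, I get the decomposition $|E(H)| = \bigl(d(x)+d(y)\bigr)+m$. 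The goal is then to show $|E(H)|<\frac52D$, contradicting that $H$ is a counterexample.

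The engine is the hypothesis of Theorem~\ref{thm1}, in the form quantified in the proof of Claim~\ref{lem1}: for each edge $e=vw$, the edges of $H$ sharing no endpoint with $e$ have total multiplicity at most $D-2$. I would apply this to the two edge classes \emph{separately}. Summing over the six inside edges and tracking incidences (each cross edge is disjoint from three inside edges, while each inside edge is disjoint from exactly one other inside edge) gives
\[
3\bigl(d(x)+d(y)\bigr)+m \le 6(D-2).
\]
Summing instead over the eight cross edges (each cross edge is disjoint from three opposite cross edges, and each inside edge is disjoint from four cross edges) gives
\[
3\bigl(d(x)+d(y)\bigr)+4m \le 8(D-2).
\]
These two weighted counts are exactly what Figure~\ref{lem9-fig} records: in $H_1$ the cross edges are drawn doubled precisely to signal that their constraints are to be used with twice the weight of the inside-edge constraints.

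I would finish by adding the first display to twice the second: the terms $d(x)+d(y)$ and $m$ recombine into $9\bigl((d(x)+d(y))+m\bigr)=9|E(H)|$, yielding $9|E(H)| \le 22(D-2)$, so $|E(H)| \le \frac{22}{9}(D-2) < \frac52D$, the desired contradiction. The genuine subtlety here — the heart of the last step — is that treating all $14$ edges symmetrically (summing every constraint with equal weight) only gives $|E(H)|\le\frac{14}{5}(D-2)$, which \emph{exceeds} $\frac52D$; one must weight the cross-edge constraints more heavily than the inside-edge constraints, reflecting that $x$ and $y$ form the unique nonadjacent pair. The only place an error could slip in is the incidence bookkeeping behind the two displays, but each reduces to a short finite count over the six inside and eight cross edges.
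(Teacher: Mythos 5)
Your proof is correct and is essentially the paper's own argument: both sum the same $14$ constraints with the eight cross edges given weight $2$ and the six inside edges weight $1$ (exactly the multiplicities encoded by $H_1$ in Figure~\ref{lem9-fig}), arriving at $9|E(H)|\le 22(D-2)$ and hence $|E(H)|\le \frac{22}{9}(D-2)<\frac52 D$. The only cosmetic difference is bookkeeping: you count the total multiplicity of edges \emph{disjoint} from each edge of $H_0$ directly, while the paper counts the complementary quantity $d_H(v)+d_H(w)-\mu(vw)\ge |E(H)|-(D-2)$ and observes that each edge of $H$ is counted $13=22-9$ times.
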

\begin{clmproof}
Suppose the claim is false.  By Claim~\ref{lem8}, we must have $H_0 = K_6-e$.
Denote by $x,y$ the nonadjacent pair in $H_0$; see Figure~\ref{lem9-fig}.  We
define a multigraph $H_1$ that has $H_0$ as its underlying simple graph, has
$\mu(e)=2$ for each edge $e$ of $H_0$ incident to $x$ or $y$, and $\mu(e)=1$
for each other edge $e$ of $H_0$.  Similar to above, for each edge $vw\in
E(H_0)$ we have $d_H(v)+d_H(w)-\mu(vw)
\ge |E(H)|-(D-2)$.  Summing over all $14+8=22$ edges of $H_1$ gives\footnote{In
other words, $H_1$ is simply a convenient way to encode the multiplicities with
which we want to sum the 14 inequalities $d_H(v)+d_H(w)-\mu(vw)\ge |E(H)|-(D-2)$
arising from the 14 edges $vw$ of $H_0$.}
$$
\sum_{vw\in E(H_1)}\left(d_H(v)+d_H(w)-\mu(vw)\right)\ge \sum_{vw\in
E(H_1)}\left(|E(H)|-(D-2)\right)
$$
It is straightforward to check that each edge of $H$ is counted exactly 13 times.
Thus, the inequality above can be rewritten as $13|E(H)|\ge 22|E(H)|-22(D-2)$,
which simplifies to $|E(H)|\le \frac{22}9(D-2)$.  This contradiction proves the
claim, which in turn proves the theorem.
\end{clmproof}

\section{Big Cliques in Squares of 2-Degenerate Graphs}
\label{sec3}
Fix a positive integer $D$.  Let \Emph{$f(D)$} be the maximum size of a clique $S$ in
the square of a 2-degenerate graph $G$ with $\Delta(G)\le D$.
It is easy to verify that if $G$ is $k$-degenerate with $\Delta(G)\le D$, then
$G^2$ has degeneracy at most 
$k(D-1)+(D-k)(k-1)$; this is witnessed by any vertex
order showing that $G$ is $k$-degenerate.  So, when $k=2$, $G^2$ is
$(3D-4)$-degenerate.  Thus, $f(D)\le 3D-3$.  Example~\ref{example1} shows that $f(D)\ge
\frac52D$, when $D$ is even.
In fact, the graph $G$ in Example~\ref{example1} has a maximum clique $S$ in
$G^2$ the vertices of which form an independent set in $G$.  Furthermore, there
exists a 2-degeneracy order $\sigma$ for $G$ in which the vertices of $S$
appear consecutively.  The goal of this section is to show that if
$f(D)>\frac52D$, then this is witnessed by a graph $G$ where $S$ behaves
``nearly'' as it does above.  More specifically, we show the following
(recall the meaning of ``nice'' from Definition~\ref{def1}).

\begin{thm}
There exists a constant $c$ such that for every positive integer $D$ 
some 2-degenerate graph $G$ with $\Delta(G)\le D$ is nice
w.r.t.~a clique $S$ in $G^2$ and $|S|\ge f(D)-c$.  In fact, $c=72$ suffices and,
when $D$ is sufficiently large, $c=60$ suffices.
\end{thm}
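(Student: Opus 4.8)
The plan is to start from an extremal graph and surgically extract a nice subconfiguration, losing only a bounded number of clique vertices. Fix a $2$-degenerate $G$ with $\Delta(G)\le D$ and a maximum clique $S$ in $G^2$, so $|S|=f(D)$, together with a $2$-degeneracy order $\sigma$; for each vertex $w$ let $N^+(w)$ denote its (at most two) neighbors that are later in $\sigma$, and set $d_S^-(w):=|\{v\in S: w\in N^+(v)\}|$. Deleting the edges inside $V(G)\setminus S$ changes nothing, since every distance-$2$ witness of a pair in $S$ uses a vertex adjacent to $S$. The single most useful fact is the budget inequality $\sum_w d_S^-(w)=\sum_{v\in S}|N^+(v)|\le 2|S|$: being an up-neighbor of clique vertices is a scarce resource. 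I would exploit this exactly as in Proposition~\ref{prop1}, where contracting one up-edge at each vertex of $S$ produces a multigraph $H$ on the up-neighbors with $|E(H)|=|S|$ and $\Delta(H)\le D$, so that the extremal analysis behind Theorem~\ref{thm1} forces the ``heavy'' part of this structure onto very few vertices.

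Concretely, I would define a \emph{core} $T:=\{w: d_S^-(w)\ge \tfrac12 D\}$. The budget bound immediately gives $|T|\le 2|S|/(\tfrac12 D)=O(1)$, so $T$ is bounded, and it plays the role of the $K_5$/$K_6$ in Example~\ref{example1}. Call $v\in S$ \emph{good} if (i) both up-neighbors of $v$ lie in $T$, (ii) $v$ has no other neighbor in $T$, and (iii) $v$ is incident to no $S$--$S$ edge; otherwise call $v$ \emph{exceptional}. The main work is to bound the number of exceptional vertices by the promised constant. Vertices that fail (i) or (ii) through a \emph{down}-neighbor in $T$ are controlled directly: the number of incidences between $S$ and down-neighbors in $T$ is $\sum_{w\in T}d_S^+(w)\le 2|T|=O(1)$. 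The remaining failures---an up-neighbor of low $d_S^-$, a pair whose only witness is a ``between'' vertex or an $S$--$S$ edge, or a pair that would lose its witness when $v$ is trimmed to two up-edges---are exactly what the token-passing argument is built to count. Processing $\sigma$ from first to last and passing a token from each deleted vertex to its up-neighbors lets me charge each such bad pair or vertex to a distinct unit of the $2|S|$ budget (or to a bounded core feature), and the amortized total is at most $72$ (and at most $60$ once $D$ is large enough to eliminate the $K_6$ possibility).

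With $S':=S\setminus\{\text{exceptional}\}$ in hand, I would build $G'$ explicitly. Keep: for each $v\in S'$ its two edges to its $T$-parents; for each pair of $S'$ not already sharing a $T$-parent, a \emph{below-witness}, namely a common neighbor $w$ whose only neighbors in $S'$ are those two vertices and which has no neighbor in $T$; and a fixed $2$-degeneracy order of $G[T]$. Place the below-witnesses (and any leftover vertices) in a block $R$, then $S'$, then $T$. By construction $S'$ is independent, every vertex has at most two neighbors later in this order (each $v\in S'$ has exactly its two $T$-parents, each below-witness has exactly its pair, and each $t\in T$ inherits its up-degree from the order of $G[T]$), and every pair of $S'$ has a common neighbor in $G'$---a shared $T$-parent or a below-witness. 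Hence $G'$ is nice with respect to $S'$ and $|S'|\ge f(D)-72$, and Theorem~\ref{main3} applied to $G'$ completes the argument.

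I expect the heart of the difficulty to be the simultaneous requirement, both in defining ``good'' and in the surgery, that every retained pair keeps a witness \emph{while} each retained clique vertex is trimmed to only two up-edges. These pull in opposite directions: trimming up-edges to enforce $2$-degeneracy can destroy the very common neighbor that witnessed a pair, so one must decide, pair by pair, whether to route it through a shared $T$-parent or through a below-witness, and then delete the bounded residue that admits neither. The token-passing scheme is precisely the bookkeeping device that makes this decision globally consistent and pins the loss at the stated constant; passing from a soft $O(1)$ down to the explicit $72$ (and $60$) is the most delicate part of the accounting.
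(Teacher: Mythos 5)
Your proposal has the right architecture---it is essentially the paper's: pass tokens along a 2-degeneracy order, identify a bounded ``core'' of heavily-hit vertices, show that almost all of $S$ attaches to that core by two up-edges, then reorder and delete edges to produce a nice pair $(G',S')$ and finish with Theorem~\ref{main3}. But the step you defer to ``the token-passing argument'' \emph{is} the proof, and with the definitions you fixed it cannot be carried out. Two concrete problems. First, your budget $\sum_w d_S^-(w)\le 2|S|$ counts only what the paper calls primary tokens, and that is not enough: before niceness is established, a single neighbor $u$ of $v$ earlier in $\sigma$ can witness $G^2$-adjacencies of $v$ to \emph{many} vertices of $S$, so adjacencies of $v$ are in no way bounded by $D$ plus up-neighbor data. (The ``one adjacency per earlier neighbor'' bound you import from Proposition~\ref{prop1} is only valid once $S$ is independent and consecutive and $V(G)\setminus S$ is independent---i.e., it presupposes exactly what you are trying to prove; for the same reason your appeal to the ``extremal analysis behind Theorem~\ref{thm1}'' at this stage is circular.) The paper's fix is the secondary tokens and the resulting inequality~\eqref{key-ineq} of Claim~\ref{clm0}, $1+\tok(v)+D+\prim(w_1)+\prim(w_2)+6\ge|S|$, where $\tok$ counts both kinds. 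Nothing in your write-up plays this role, and without it there is no bound at all---not even a soft $O(1)$---on the number of $v\in S$ whose up-neighbors have small $d_S^-$.

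Second, even granting inequality~\eqref{key-ineq}, your threshold $D/2$ defining $T$ is incompatible with a constant loss. From~\eqref{key-ineq} one gets $\tok(v)+\prim(w_1)+\prim(w_2)\ge\frac32D+53$; to force \emph{both} up-neighbors to satisfy $\prim(w_i)\ge D/2$ you would need $\tok(v)\le 53$ (since $\prim(w_1)\le D$), and the total token budget $6|S|$ then bounds the number of failing vertices only by $6|S|/53=\Theta(D)$, not by a constant. The paper's thresholds are chosen to be compatible: $\Basic$ means $\tok(v)<\frac14D$ and $\Big$ means $\prim(w)>\frac14D$, so that~\eqref{key-ineq} forces both up-neighbors of a basic vertex into $\Big$ (Claim~\ref{clm1}), while $|\NonBasic\cup\Big|\le 6|S|/(D/4)\le 72$ is constant. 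Your constants 72 and 60 cannot be recovered from a $2|S|$ budget at threshold $D/2$ (and in the paper the improvement to 60 comes from the discharging refinement in Claim~\ref{clm5}, not from ``eliminating the $K_6$ possibility''). A final, smaller gap: you assert that every retained pair not sharing a $T$-parent has a ``below-witness'' with no neighbor in $T$ and no other neighbor in $S'$, but you never prove such witnesses exist. The paper never needs to select them: once every vertex of $S'$ has both up-neighbors in $\Big$, any witness outside $\Big$ automatically has at most two neighbors in $S'$ (an earlier neighbor in $S'$ would force the witness into $\Big$), so simply deleting all edges with both endpoints outside $S'$ already yields the required 2-degeneracy.
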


\begin{proof}
We prove the theorem with $c=72$, but we have not made
significant effort to minimize $c$.  (A small trick at the end improves this to
$c=60$ when $D\ge 1729$.)

Fix a positive integer $D$\aside{$D$, $G$, $S$}, and let $G$ be a 2-degenerate
graph with maximum degree at most $D$ such that $G^2$ has a clique $S$ of
order $f(D)$.  Subject to this,
choose $G$ to minimize $|V(G)|+|E(G)|$.  We assume that $f(D) \ge \frac52D+60$;
otherwise, we are done by Example~\ref{example1}.
Note that every vertex $v\in V(G)$ must have a neighbor in $S$;
otherwise, deleting $v$ contradicts the minimality of $G$.

Let \Emph{$\sigma$} be a vertex order witnessing that $G$ is
2-degenerate.  Subject to this, choose $\sigma$ so the first vertex in $S$
appears as late as possible in $\sigma$.
We delete the vertices of $G$ in the order $\sigma$.  Each time
we delete a vertex $v$ of $S$, vertex $v$ gives a ``primary'' token to each
of its neighbors later in $\sigma$.  Moreover, each time we delete a
vertex $v$ that currently holds $s$ primary tokens (for some $s\ge 1$), $v$
gives to each of its neighbors later in $\sigma$ exactly $s$ ``secondary''
tokens; see Figure~\ref{fig-tokens}.

\begin{figure}[!h]
\centering
\begin{tikzpicture}[thick, scale=.75]
\tikzstyle{uStyle}=[shape = circle, minimum size = 10.5pt, inner sep = 0pt,
outer sep = 0pt, draw, fill=white, semithick]
\tikzstyle{lStyle}=[shape = circle, minimum size = 4.5pt, inner sep = 0pt,
outer sep = 0pt, draw, fill=none, draw=none]
\tikzset{every node/.style=uStyle}
\def\off{5mm}
\def\myRad{6.5mm}

\foreach \ang/\rad/\name in {90/2/2, 150/1/4, 30/1/5, 210/2/3, 270/1/6,
330/2/7, 30/2.5/1}
\draw (\ang:\rad*\myRad) node (v\name) {\footnotesize{\name}};

\foreach \x/\y in {1/2, 1/7, 2/4, 2/5, 3/4, 3/6, 4/5, 4/6, 5/6, 5/7, 6/7}
\draw (v\x) -- (v\y);

\begin{scope}[xshift=2in]
\foreach \ang/\rad/\name in {90/2/2, 150/1/4, 30/1/5, 210/2/3, 270/1/6,
330/2/7}
\draw (\ang:\rad*\myRad) node (v\name) {\footnotesize{\name}};

\foreach \x/\y in {2/4, 2/5, 3/4, 3/6, 4/5, 4/6, 5/6, 5/7, 6/7}
\draw (v\x) -- (v\y);

\foreach \tok/\x/\ang in {
{{\bolder 1}}/2/90,
{{\bolder 1}}/7/270
}
\draw (v\x) ++ (\ang:\off) node[lStyle] {\footnotesize{\tok}};
\end{scope}

\begin{scope}[xshift=4in]
\foreach \ang/\rad/\name in {150/1/4, 30/1/5, 210/2/3, 270/1/6,
330/2/7}
\draw (\ang:\rad*\myRad) node (v\name) {\footnotesize{\name}};

\foreach \x/\y in {3/4, 3/6, 4/5, 4/6, 5/6, 5/7, 6/7}
\draw (v\x) -- (v\y);

\foreach \tok/\x/\ang in {
{1 {\bolder 2}}/4/90,
{1 {\bolder 2}}/5/90,
{{\bolder 1}}/7/270
}
\draw (v\x) ++ (\ang:\off) node[lStyle] {\footnotesize{\tok}};
\end{scope}

\draw (10.2,-1.625) node[lStyle] {\Large{$\downarrow$}};

\begin{scope}[yshift=-.3in]
\begin{scope}[xshift=4in, yshift=-1in]
\foreach \ang/\rad/\name in {150/1/4, 30/1/5, 270/1/6,
330/2/7}
\draw (\ang:\rad*\myRad) node (v\name) {\footnotesize{\name}};

\foreach \x/\y in {4/5, 4/6, 5/6, 5/7, 6/7}
\draw (v\x) -- (v\y);

\foreach \tok/\x/\ang in {
{1 {\bolder {2 3}}}/4/90,
{1 {\bolder 2}}/5/90,
{{\bolder 3}}/6/270,
{{\bolder 1}}/7/270
}
\draw (v\x) ++ (\ang:\off) node[lStyle] {\footnotesize{\tok}};
\end{scope}

\begin{scope}[xshift=2.25in, yshift=-1in]
\foreach \ang/\rad/\name in {30/1/5, 270/1/6, 330/2/7}
\draw (\ang:\rad*\myRad) node (v\name) {\footnotesize{\name}};

\foreach \x/\y in {5/6, 5/7, 6/7}
\draw (v\x) -- (v\y);

\foreach \tok/\x/\ang in {
{1 {\bolder 2} 3 {\bolder 4}}/5/90,
{2 {\bolder{3 4}}}/6/270,
{{\bolder 1}}/7/270
}
\draw (v\x) ++ (\ang:\off) node[lStyle] {\footnotesize{\tok}};
\end{scope}

\begin{scope}[xshift=.7in, yshift=-1in]
\foreach \ang/\rad/\name in {270/1/6, 330/2/7}
\draw (\ang:\rad*\myRad) node (v\name) {\footnotesize{\name}};

\foreach \x/\y in {6/7}
\draw (v\x) -- (v\y);

\foreach \tok/\x/\ang in {
{2 {\bolder{3 4 5}}~~~~}/6/270,
{~~~~{\bolder 1} 2 4 \bolder 5}/7/270
}
\draw (v\x) ++ (\ang:\off) node[lStyle] {\footnotesize{\tok}};
\end{scope}

\begin{scope}[xshift=-.883in, yshift=-1in]
\foreach \ang/\rad/\name in {330/2/7}
\draw (\ang:\rad*\myRad) node (v\name) {\footnotesize{\name}};

\foreach \tok/\x/\ang in {
{{\bolder 1} 2 3 4 \bolder{5 6}}/7/270
}
\draw (v\x) ++ (\ang:\off) node[lStyle] {\footnotesize{\tok}};
\end{scope}

\foreach \x/\y in {0.5/-2.7, 4.35/-2.7, 8.4/-2.7}
\draw (\x,\y) node[lStyle] {\Large{$\leftarrow$}};

\end{scope}

\foreach \x/\y in {2.85/.34, 7.6/.34}
\draw (\x,\y) node[lStyle] {\Large{$\rightarrow$}};

\end{tikzpicture}
\caption{A portion of a 2-degenerate graph $G$ and the positions of its tokens as
its vertices are deleted in a 2-degeneracy order; here
$S=\{1,\ldots,7\}$.  The number of each token indicates the vertex where
the token originated.  Primary tokens are shown in
bold. At each vertex, we show at most one token received from each other
vertex, giving preference to primary tokens.\label{fig-tokens}}
\end{figure}
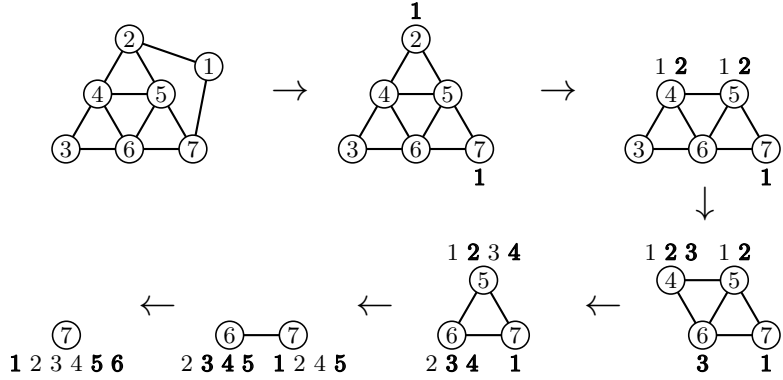

We will analyze this process to show that all but a constant number of vertices
in $S$ induce an independent set.  That is, there exists $\widehat{S}\subseteq
S$, of bounded size, such that $S\setminus \widehat{S}$ is an independent set.
Furthermore, we can modify $G$, $S$, and $\sigma$ to get a graph
$G'$ that is 2-degenerate, as witnessed by a vertex order $\sigma'$, and $G'$ is
nice w.r.t.~a clique $S'$ in $(G')^2$.  Finally, $|S|-|S'|\le c$. 
Let $\tok(v)$\aside{$\tok(v)$} (resp.~$\prim(v)$\aaside{$\prim(v)$}{4mm}) denote the number of
tokens (resp.~primary tokens) that each vertex $v$ holds immediately before it 
is deleted.

\begin{clm}
\label{clm0}
If $v\in S$ and $w_1,w_2$ are the neighbors of $v$ later in $\sigma$, if they
exist, then
\begin{equation}
1+\tok(v)+D+\prim(w_1)+\prim(w_2)+6\ge |S|\ge \frac52D+60.\tag{$\star$}
\label{key-ineq}
\end{equation}
\end{clm}

\begin{clmproof}
Fix $v\in S$.  
Recall that $v$ must be adjacent in $G^2$ to every
vertex in $S$.  Each neighbor $w$ of $v$ that precedes $v$ in $\sigma$ can be
the source of at most one such adjacency in $G^2$ to a vertex $x$ from which $v$
receives no token (neither primary nor secondary), so in total $v$ can have at
most $D$ such adjacencies.  Each other adjacency in $G^2$ must be accounted for
either (a) by a token received by $v$ or (b) by a primary token that has been
or will be received by $w_1$ or $w_2$ or (c) by being an adjacency to $w_1$ or
$w_2$ or to a neighbor of some $w_i$ that comes later than $w_i$ in $\sigma$. 
At most $\tok(v)$ vertices of $S$ are handled by (a), and at most 6 vertices of
$S$ are handled by (c).  The number handled by (b) is at most
$\prim(w_1)+\prim(w_2)$.  This implies~\eqref{key-ineq}.
\end{clmproof}
\smallskip

To analyze the movement of tokens, we use the following definitions (all
w.r.t.~$G$, $S$, and $\sigma$).
Let $\EmphE{\Big}{-4mm}:=\{v\in V(G): \prim(v) > \frac14D\}$.
Let $\Emph{\Basic}:=\{v\in S: \tok(v) < \frac14D\}$.
Let $\EmphE{\NonBasic}{0mm}:=S\setminus\Basic$.
So $\Big\cap\Basic=\emptyset$.  And possibly both 
$\Big\not\subseteq\NonBasic$ and $\NonBasic\not\subseteq\Big$.
Note that each vertex $v$ gives primary tokens to at most two vertices, and each
of those vertices gives secondary tokens (due to $v$) to at most two more
vertices.  Thus, $v$ contributes at most 6 to the total number of tokens held by
vertices at any point.  So $\sum_{v\in V(G)}\tok(v)\le 6|S|\le 6f(D) \le 18 D$.  
Thus, we get that $|\NonBasic\cup \Big|\le 18D/(D/4)=72$.
We will later refine this argument to get a stronger upper bound.

\begin{clm}
\label{clm1}
Each vertex $v\in \Basic$ has two neighbors later in $\sigma$ that are both in $\Big$.
\end{clm}

\begin{clmproof}
Let $w_1,w_2$ be the neighbors of $v$ later in $\sigma$, if they exist.
Inequality~\eqref{key-ineq} simplifies to $\tok(v)+\prim(w_1)+\prim(w_2)\ge
\frac32D+53$.
Since $v$ is basic, we have $\tok(v)<\frac14D$.  So $\prim(w_1)+\prim(w_2)\ge
\frac54D+53$.  Clearly, $\prim(w_i)\le D$ for each $i\in\{1,2\}$, which implies that
also $\prim(w_i)\ge \frac14D+53$, for each $i\in\{1,2\}$.  So $w_1,w_2\in \Big$.
\end{clmproof}
\smallskip

Let $\Emph{W}:=\{w: w\in N(v)$ for some $v\in \NonBasic$ and $w$ appears later
in $\sigma$ than $v$\}.  
\begin{clm}
\label{clm1.5}
If some vertex $w$ comes after the first vertex in $S$ and $w\notin \Big$, then
$w\in W$.
\end{clm}
\begin{clmproof}
Since $G$ minimizes $|V(G)|+|E(G)|$, every edge has an endpoint in $S$.  Since
$\sigma$ puts the first vertex $v$ of $S$ as late as possible, every vertex $w$ after
$v$ is a later neighbor of some vertex $x\in S$.  
Every vertex in $\Basic$ has as its later neighbors in $\sigma$ exactly two
vertices in $\Big$, by Claim~\ref{clm1}.  
Thus, if $w\notin \Big$, then $x\in\NonBasic$; hence, by definition, $w\in W$.
\end{clmproof}

Now we modify $G$, $S$, $\sigma$ to construct \Emph{$G'$, $S'$, $\sigma'$}.
\begin{enumerate}
\item[(1)] Let $S':=\Basic\setminus W$.
\item[(2)] Move $\Big$ to the end of $\sigma$ (after the final vertex of $S'$) and
move $(\NonBasic\cup W)\setminus \Big$
to the start of $\sigma$ (before the first vertex of $S'$); call the resulting order $\sigma'$.
\item[(3)] Delete every edge of $G$ with both endpoints outside $S'$; call the
resulting graph $G'$.
\end{enumerate}
Now we check that $G'$, $S'$, $\sigma'$ satisfy the desired criteria.

\begin{clm}
\label{clm2}
$S'$ appears consecutively in $\sigma'$, and $S'$ and $V(G')\setminus S'$ are
each independent in $G'$. 
\end{clm}
\begin{clmproof}
By Claim~\ref{clm1}, every vertex of $S'$ has two later
neighbors in $\Big$.  And every vertex $v$ of $\Big$ has no later neighbors.  
(Such a neighbor $w$ would have $\tok(w)>\frac14D$, so $w$ would be excluded from
$\Basic$, and hence from $S'$.  Thus, edge $vw$ would be removed in (3).)
So
$\Big$ comes, in some order, at the end of $\sigma'$.  And every vertex 
not in $S'\cup\Big$ has no earlier neighbors in $S'$, so comes before the
first vertex of $S'$ in $\sigma'$ (by Claim~\ref{clm1.5} and (2) above); thus,
$S'$ is consecutive in $\sigma'$.  To see that $S'$ is an independent set in
$G'$, note that every vertex in $\Basic$ has two later neighbors in $\Big$. 
But $S'\subseteq \Basic$, so $S'\cap\Big\subseteq\Basic\cap\Big=\emptyset$. 
Thus, $S'$ is independent.  Finally, $V(G')\setminus S'$ is independent in $G'$
by step (3) above constructing $G'$.
\end{clmproof}

\begin{clm}
\label{clm3}
$G'$ is $2$-degenerate, as witnessed by $\sigma'$. 
\end{clm}
\begin{clmproof}
Consider steps (2) and (3) above.  Suppose we move some $v\in \Big$ to the end of
$\sigma$.  If $v$ has a neighbor $w$ later (in $\sigma$), then $w \notin
\Basic$, so $w\notin S'$.  Thus, in step (3) we delete edge $vw$.
Suppose instead that we move a vertex $v\in (\NonBasic\cup W)\setminus \Big$ to
the start of $\sigma$.  If $v$ has a neighbor $w$ earlier (in $\sigma$), then
$w\notin\Basic$, by Claim~\ref{clm2}; so $w\notin S'$.  Thus, again in step (3)
we delete $vw$.  So $\sigma'$ is a 2-degeneracy order for $G'$ because $\sigma$
is a 2-degeneracy order for $G$.
\end{clmproof}

\begin{clm}
\label{clm4}
$S'$ forms a clique in $G'^2$. 
\end{clm}
\begin{clmproof}
Every edge deleted when forming $G'$ has both endpoints outside
of $S'$.  But deleting such edges cannot impact adjacency in $G'^2$ between two
vertices in $S'$.  Thus, $G'^2[S'] = G^2[S']$.
\end{clmproof}

\begin{clm}
\label{clm5}
$|S\setminus S'|\le 72$.
More strongly (for $D\ge 144$), we have $|S\setminus S'|\le 60+12\times(144/D)$.
\end{clm}

\begin{clmproof}
Let \Emph{$X$}$:=\Big\cup\NonBasic\cup W$ and note from (1) that $|S\setminus
S'|\le |X|$.
To bound the size of $X$,  we use a discharging argument.  Every vertex $x\in
X$ begins with charge $\tok(x)$.  We consider each $x\in \Big\cup \NonBasic$
(note that this union does not include $W$) and 
redistribute charge so that $x$, $w_1$, and $w_2$ all finish with charge at
least $\frac14D$; again, $w_1$ and $w_2$ are the neighbors of $x$ that follow $x$ in
$\sigma$ (if they exist).  Recall that $\sum_{v\in V(G)}\tok(v)\le 6|S|\le 18D$.  
Thus, this proves that $|S\setminus S'|\le |X|\le 6|S|/(D/4) \le 18D/(D/4)=72$.

Fix $x\in \Big$.  By definition, $\prim(x)>\frac14D$.  Further, $\tok(w_i)\ge
\prim(x)>\frac14D$ for each $i\in\{1,2\}$.
So $x$ does not need to give charge to any neighbor later in
$\sigma$.  Thus, $x$ (and each of its later neighbors) finishes with charge at
least $\frac14D$.

Fix $x\in (\NonBasic\setminus\Big)$.  If $\tok(x)\ge \frac34D$, then $x$ gives
charge $\frac14D$ to each of its at most two neighbors that follows it in $\sigma$.
Suppose instead that $\frac12D\le \tok(x)<\frac34D$.
By~\eqref{key-ineq}, we have $\prim(w_1)+\prim(w_2)>\frac34D$. 
So clearly
$\prim(w_i)>\frac14D$ for some $i\in\{1,2\}$.  Thus, at least one later neighbor of
$x$ is big.  So $x$ gives charge $\frac14D$ to at most one neighbor later in
$\sigma$.  Thus, $x$ (and each of its later neighbors) finishes with charge at
least $\frac14D$.  

Finally, suppose that $\frac14D\le \tok(x)<\frac12D$.  So \eqref{key-ineq} gives $\prim(w_1)+\prim(w_2)>D$.
Thus, $w_i\in\Big$ for at least one $i\in\{1,2\}$, so $w_i$ needs no charge from $x$.
If $w_1,w_2\in \Big$, then neither $w_i$ receives charge from $x$,
so $x$ (and each $w_i$) finishes with charge at least $\frac14D$.
Suppose instead, by symmetry, that $w_1\in \Big$ and $w_2\notin \Big$. 
Since $\prim(w_1)\le D$, we
know that $\tok(x)+\prim(w_2)\ge \frac12D$.  Thus, $x$ gives to $w_2$ charge
$\tok(x)-\frac14D$.  Clearly $x$ finishes with charge at least $\frac14D$.  Furthermore,
$w_2$ ends with charge at least $\prim(w_2)+(\tok(x)-\frac14D)\ge
\frac12D-\frac14D=\frac14D$.

Thus, each vertex of $X$ finishes with charge at least $\frac14D$.  Hence,
$|S\setminus S'|\le |X|\le 6|S|/(D/4)\le 18D/(D/4)=72$, as claimed.  
Combining this with Theorem~\ref{main3} gives $|S|\le \frac52D+72$.
Substituting this bound into the previous inequality gives $|X|\le 6|S|/(D/4)\le
(15D+6\times 72)/(D/4) = 60+(12\times 144)/D$.
\end{clmproof}

Claims~\ref{clm2}--\ref{clm5} show $c=72$ suffices.
When $D>12\times 144$, Claim~\ref{clm5} shows $c=60$ suffices.
\end{proof}

\section{Big Cliques in Squares of Graphs with Mad \texorpdfstring{$\bmm{<4}$}{<4}}
\label{sec4}

In this section, we extend the main result of Section~\ref{sec3} from the class
of 2-degenerate graphs to the class of graphs with maximum average degree less
than 4.  To formalize this idea, we use the following definition.
Fix a positive integer $D$.  Let \Emph{$g(D)$} be the maximum size of a clique in
the square of a graph $G$ with $\mad(G)<4$ and $\Delta(G)\le D$.  Recall, from
Section~\ref{sec3}, that $f(D)$ is defined as the analogous maximum over
2-degenerate graphs.  Since every 2-degenerate graph $G$ has $\mad(G)<4$,
clearly $f(D)\le g(D)$.  In this section we show that also $g(D)-f(D)$ is
bounded by a small constant.  We formalize this result as follows.

\begin{thm}
\label{thm4}
There exists a constant $c$ such that for every positive integer $D$ some
2-degenerate graph $G$ with $\Delta(G)\le D$ has a clique $S'$ in $G^2$ 
with $|S'|\ge g(D)-c$.  In fact, $c=460$ suffices.
\end{thm}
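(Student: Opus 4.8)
The goal is to prove Theorem~\ref{thm4}: that for any graph $G$ with $\mad(G)<4$ and $\Delta(G)\le D$, whose square contains a clique $S$ of size $g(D)$, we can extract a 2-degenerate subgraph $G'$ with a clique $S'$ in $(G')^2$ satisfying $|S\setminus S'|\le 460$. The plan is to mirror the strategy of Section~\ref{sec3} as closely as possible. Since $G$ has $\mad(G)<4$, it is \emph{3-degenerate}, so we fix a 3-degeneracy order $\sigma$ and run the same token-passing process, but now each deleted vertex has up to three later neighbors instead of two. As before, a deleted vertex of $S$ passes a primary token to each later neighbor, and any deleted vertex holding $s$ primary tokens passes $s$ secondary tokens to each later neighbor. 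The main structural obstacle is that the clean dichotomy of Section~\ref{sec3}---every vertex either has two big later neighbors or lies in a bounded ``bad'' set---no longer holds verbatim, because a vertex can now have three later neighbors, and because $\mad(G)<4$ only bounds the \emph{average} degree, not every local neighborhood; in particular we must exploit the mad hypothesis to control how many vertices can have large degree.

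First I would reprove the analogue of the key inequality~\eqref{key-ineq} from Claim~\ref{clm0}. Fix $v\in S$ with later neighbors $w_1,w_2,w_3$; the same accounting shows that every vertex of $S$ adjacent to $v$ in $G^2$ is handled by a token at $v$, by a primary token at some $w_i$, by being one of the $w_i$ or a later neighbor of some $w_i$, or by an earlier neighbor of $v$. With three later neighbors the ``by adjacency'' count rises (at most $3+9$ vertices handled by case (c)), and the earlier-neighbor term is still at most $D$. This yields an inequality of the shape
\begin{equation*}
1+\tok(v)+D+\sum_{i=1}^3\prim(w_i)+12\ge |S|\ge \tfrac52D+c'
\end{equation*}
for a suitable constant $c'$. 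I would then redefine $\Big$, $\Basic$, and $\NonBasic$ with appropriately adjusted thresholds (say $\prim(v)>\tfrac14D$ and $\tok(v)<\tfrac14D$, or thresholds tuned to make the charging close). The crucial counting input is that $\sum_{v}\tok(v)$ is still bounded: each vertex now produces primary tokens to at most three neighbors, each of which produces secondary tokens to at most three more, so each source contributes at most $3+9=12$ tokens total, giving $\sum_v\tok(v)\le 12|S|\le 36D$. This is the place where the constant inflates relative to Section~\ref{sec3}.

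Next I would establish the structural analogue of Claim~\ref{clm1}: a basic vertex must have enough big later neighbors to satisfy the inequality, so $\sum_{i}\prim(w_i)$ is forced to be large, forcing at least two (perhaps all three) of the $w_i$ into $\Big$. I would define $W$ as the set of later neighbors of $\NonBasic$ vertices, set $S':=\Basic\setminus W$, and perform the same three-step surgery: move $\Big$ to the end of $\sigma$, move $(\NonBasic\cup W)\setminus\Big$ to the front, and delete all edges with both endpoints outside $S'$. The verification that $S'$ is consecutive, that $S'$ and $V(G')\setminus S'$ are independent, that $\sigma'$ witnesses 2-degeneracy of $G'$ (here I must check that each $S'$-vertex retains at most two later neighbors, which follows because at least one of its three later neighbors lies in $\Big$ and that edge is deleted---or else an edge-deletion argument trims the third), and that $S'$ remains a clique in $(G')^2$, all proceed as in Claims~\ref{clm2}--\ref{clm4}. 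I expect the 2-degeneracy step to need the most care, since a priori an $S'$-vertex has three later neighbors; I would argue that a basic vertex has \emph{exactly} two later neighbors in $\Big$ after suitable edge deletions, or equivalently that the third later neighbor is not in $S'$ so its edge is removed.

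The final and most delicate step is the charging argument bounding $|X|$ where $X:=\Big\cup\NonBasic\cup W$, giving $|S\setminus S'|\le |X|$. Each $x\in X$ starts with charge $\tok(x)$ and I redistribute so every vertex of $X$ finishes with charge at least $\tfrac14D$; dividing the total $\sum\tok(x)\le 12|S|$ by $\tfrac14D$ and using $|S|\le\tfrac52D+O(1)$ (bootstrapped from the 2-degenerate bound via Theorem~\ref{main3} applied to $G'$) yields $|X|\le 12|S|/(D/4)\approx 120$ in the leading term, but the accumulated additive slack from the three-neighbor cases, the larger case-(c) constant ($12$ vs.\ $6$), and the weaker thresholds pushes the final constant up to $460$. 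The hard part will be the case analysis in this charging: with three later neighbors there are more subcases (how many of $w_1,w_2,w_3$ are big, and how $\tok(x)$ splits among them), and I must ensure that in every case the non-big later neighbors can be topped up to $\tfrac14D$ using $x$'s surplus. I have not tried to optimize the constant, so I would accept a generous $c=460$ rather than chase a tight bound, exactly as the theorem statement promises.
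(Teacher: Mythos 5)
Your proposal breaks down at the key inequality, and the missing piece is precisely the new idea of the paper's Section~\ref{sec4}: \emph{tertiary} tokens. You assert that with only primary and secondary tokens ``the earlier-neighbor term is still at most $D$.'' It is not. In a 3-degeneracy order an earlier neighbor $w$ of $v$ has up to \emph{two} later neighbors besides $v$, both possibly in $S$; the tokens $v$ receives from $w$ account only for $w$ itself (primary) and for the earlier $S$-neighbors of $w$ (secondary), never for $w$'s later neighbors. So a single earlier neighbor can be the source of two token-free adjacencies of $v$, and your term $D$ must be replaced by $2D$. With $2D$ in place of $D$, the inequality only yields $\tok(v)+\prim(w_1)+\prim(w_2)+\prim(w_3)\ge\frac12D+O(1)$, so for a basic vertex you get $\prim(w_1)+\prim(w_2)+\prim(w_3)\ge\frac14D+O(1)$, which by pigeonhole does not force even one later neighbor with $\prim(w_i)>\frac18D$ (all three could be near $\frac1{12}D$), let alone the two big later neighbors that the analogue of Claim~\ref{clmC} requires; the entire surgery (consecutiveness of $S'$, 2-degeneracy of $\sigma'$) then has nothing to stand on. The paper's fix is exactly the ``wrinkle'' you omit: define $R_3$ as the set of vertices outside $S$ with at least three neighbors in $S$, have each such vertex pass a tertiary token to its three later $S$-neighbors, and---using $\mad(G)<4$ a second time---prove $|R_3|<2|S|$ (Claim~\ref{clmA}) so that these extra tokens do not inflate $\sum_v\tok(v)$ beyond $18|S|$. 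That mechanism is what keeps the earlier-neighbor term at $D$, and it cannot be recovered from your write-up.

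There is a second, independent gap: your final step transplants Section~\ref{sec3}'s surgery ($W$, $S':=\Basic\setminus W$, charging every vertex of $\Big\cup\NonBasic\cup W$ up to $\frac14D$), but that charging genuinely fails with three later neighbors---and this is exactly the case analysis you defer. A vertex $x\in\NonBasic$ with $\tok(x)$ just above $\frac14D$ can have one big later neighbor absorbing essentially all of $\prim(w_1)+\prim(w_2)+\prim(w_3)$ and two later neighbors with $\prim$ near zero; topping both up to $\frac14D$ costs about $\frac12D$, while $x$ has under $\frac14D$ of surplus. The paper sidesteps this with a different surgery: keep at most two later edges per vertex, preferring edges into $\Big$, delete the rest, and bound the resulting loss in $|S'|$ by $\sum_{v\in R}(d_{S^+}(v)-2)<2|\Big|$, where $R$ is the set of non-big vertices with at least three neighbors in $\Basic\cup\Big$ (Claim~\ref{clmD})---a third application of $\mad(G)<4$. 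It also bounds $|\Big|\le 80$ via Theorem A's bound $|S|\le 3D+1$ rather than by bootstrapping through Theorem~\ref{main3}. In short, beyond supplying 3-degeneracy, the mad hypothesis must be invoked twice more (Claims~\ref{clmA} and~\ref{clmD}), and neither invocation appears in your proposal.
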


We have made very little effort to minimize $c$ (so it is likely far from
sharp), prefering to present a simpler proof.  Before proving
Theorem~\ref{thm4}, we recall a result of Hocquard, Kim, and
Pierron which slightly simplifies our proof.  If we instead want to avoid using
this result, we can just observe that $G$ is 3-degenerate, so $G^2$ is
$5D$-degenerate, which results in a larger value of $c$.

\begin{thmA}[\cite{HKP}]
If $G$ is a graph with $\Delta(G)\le D$ and $\mad(G)<4$, then $G^2$ has
degeneracy at most $3D$.  In particular, $\omega(G^2)\le 3D+1$.
\end{thmA}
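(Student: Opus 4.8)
The plan is to prove the degeneracy bound, from which $\omega(G^2)\le 3D+1$ is immediate (in any graph, the clique number is at most the degeneracy plus one). Recall that $G^2$ has degeneracy at most $k$ exactly when every induced subgraph $G^2[W]$, for $\emptyset\ne W\subseteq V(G)$, has a vertex of degree at most $k$; and $G^2[W]$ has an edge $uu'$ precisely when $u,u'\in W$ lie at distance at most $2$ in the \emph{full} graph $G$, where the connecting path may leave $W$. This last point is exactly why a purely local argument (say, peeling off a vertex of small degree in $G[W]$) fails: such a vertex can have many neighbours outside $W$, each acting as a bridge to many vertices of $W$. I would sidestep this by bounding the \emph{average} degree of $G^2[W]$ globally rather than pointwise, which forces a vertex of at most that degree. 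The one structural fact I extract from $\mad(G)<4$ at the outset is an \emph{orientation of $G$ with out-degree at most $2$}: since $\mad(G)<4$ gives $|E(F)|\le 2|V(F)|-1<2|V(F)|$ for every subgraph $F$, the classical orientation theorem (a graph orients with all out-degrees $\le d$ iff $|E(F)|\le d|V(F)|$ for all $F$) supplies such an orientation. This orientation need not be acyclic, so it is \emph{not} a degeneracy order of $G$; I use it only as a counting device.

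Next I would count the edges of $G^2[W]$. The distance-$1$ contribution is just $|E(G[W])|<2|W|$. For the distance-$2$ contribution, every such pair $\{u,u'\}$ (with $u,u'\in W$) arises from a cherry $u\text{--}x\text{--}u'$ with centre $x\in V(G)$ (possibly $x\notin W$), and I classify the cherry by how its two edges are oriented at $x$: the \emph{source} type ($x\to u$ and $x\to u'$), the \emph{sink} type ($u\to x$ and $u'\to x$), and the \emph{through} type (a directed $2$-path). Crucially, letting $x$ range over all of $V(G)$ automatically accounts for bridges outside $W$. Through-type pairs are cheap: a directed $2$-path with tail in $W$ follows two out-edges in a row, so each tail $u\in W$ starts at most $2\cdot 2=4$ of them, giving only $O(|W|)$ such pairs. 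Sink-type pairs I charge to their centre $x$: their number at $x$ is $\binom{|N^{\mathrm{in}}(x)\cap W|}{2}$, and since $\sum_x|N^{\mathrm{in}}(x)\cap W|$ equals the number of edges with tail in $W$, which is at most $2|W|$ (out-degree $\le 2$), convexity bounds the total sink contribution by about $(D-1)|W|$. Source-type pairs I also charge to $x$: because $\mathrm{outdeg}(x)\le 2$, each centre yields at most one source pair, and such a centre injects two out-edges into $W$, so the number of source pairs is at most half the number of edges with head in $W$, i.e.\ at most $\tfrac{D}{2}|W|$.

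Summing, $|E(G^2[W])|\le 2|W|+\tfrac{D}{2}|W|+(D-1)|W|+O(|W|)=\bigl(\tfrac32 D+O(1)\bigr)|W|$, so $G^2[W]$ has a vertex of degree at most $3D+O(1)$; taking these vertices in the peeling order witnesses degeneracy $\le 3D+O(1)$. The main obstacle is squeezing the additive constant down to recover the exact bound $3D$: the two heavy terms are the sink term $(D-1)|W|$ (high-in-degree centres, i.e.\ vertices that are common neighbours of many vertices of $W$) and the source term $\tfrac{D}{2}|W|$, and these competing terms both draw on the same scarce edge budget $|E|<2|V|$, so a joint accounting—rather than bounding them in isolation—should remove the slack. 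This delicate balancing is the part carried out in~\cite{HKP}; everything else is the routine cherry bookkeeping above, made possible by the out-degree-$\le 2$ orientation that $\mad(G)<4$ guarantees.
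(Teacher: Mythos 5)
This statement is not proved in the paper at all: it is quoted verbatim from \cite{HKP} and used as a black box (the paper even remarks that the cruder fallback ``$G$ is $3$-degenerate, hence $G^2$ is $5D$-degenerate'' would do, at the cost of a larger additive constant in Theorem~\ref{thm4}). So your proposal has to stand on its own, and as written it does not prove the statement. Your counting is sound as far as it goes, but it only yields degeneracy $3D+O(1)$: with an out-degree-$2$ orientation, the four contributions are at most $2|W|$ (distance-one pairs), $4|W|$ (through type), $(D-1)|W|$ (sink type, by convexity), and $\tfrac{D}{2}|W|$ (source type), so $|E(G^2[W])|\le\bigl(\tfrac32 D+5\bigr)|W|$, giving a vertex of degree at most $3D+10$ --- not $3D$. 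Your final paragraph concedes exactly this and defers the ``delicate balancing'' needed to remove the constant to \cite{HKP}; since the statement being proved \emph{is} the theorem of \cite{HKP}, that deferral is circular, and the proposal is an incomplete proof of the bound as stated.

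The missing step is also not mere bookkeeping. Your suggested rescue --- that the sink and source terms ``draw on the same scarce edge budget'' --- is inaccurate: the sink term is fed by edges with tail in $W$ (at most $2|W|$ of them), while the source term is fed by edges with head in $W$ (up to about $D|W|$ of them), so the two budgets are essentially disjoint. The only coupling is the constraint $d^{\mathrm{in}}(u)+d^{\mathrm{out}}(u)\le D$ for $u\in W$, and exploiting it still leaves a bound of the form $\bigl(\tfrac32 D+O(1)\bigr)|W|$, because the lower-order terms $2|W|$ and $4|W|$ never disappear under any global averaging over $W$. Reaching the exact constant $3D$ appears to require a more local argument than bounding the average degree of $G^2[W]$. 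For what it is worth, your weaker conclusion $\omega(G^2)\le 3D+11$ would still serve the only purpose Theorem~A has in this paper --- bounding the number of big vertices in the proof of Theorem~\ref{thm4} --- at the price of inflating the constant $c=460$; the paper itself says as much about the $5D$-degeneracy fallback. But that observation does not turn your argument into a proof of the statement as written.
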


\begin{proof}[Proof of Theorem~\ref{thm4}.]
Fix $D$\aside{$D$, $G$} and let $G$ be a graph with $\Delta(G) \le D$ and $\mad(G) < 4$ such
that $G^2$ contains a clique \Emph{$S$} of order $g(D)$.  Subject to this, choose $G$
to minimize $|V(G)|+|E(G)|$.  As a result, $V(G)\setminus S$ is an independent
set.  We assume that $g(D)\ge \frac52D+60$; otherwise, we are done (for
this choice of $D$) with $c=60$ by Example~\ref{example1}.  Since $\mad(G) <
4$, the graph $G$ has a 3-degeneracy order \EmphE{$\sigma$}{-4mm}.  Let
\Emph{$R_3$} be the subset of $V(G)\setminus S$ each with at least 3
neighbors in $S$.

\begin{clm}
\label{clmA}
$|R_3| < 2|S|$.
\end{clm}
\begin{clmproof}
Let $G_1:=G[S\cup R_3]$.  Clearly $\sum_{v\in V(G_1)}d_{G_1}(v)\ge 6|R_3|$.
So $6|R_3|/(|S|+|R_3|)\le \mad(G_1)\le \mad(G)<4$.  That is,
$6|R_3|<4|S|+4|R_3|$, which proves the claim.
\end{clmproof}

We pass tokens similarly to the way we did in Section~\ref{sec3}, with an added
wrinkle for vertices in $R_3$.  Just before it is deleted, each $v\in S$ passes
a ``primary'' token to each of its neighbors later in $\sigma$.  Again, if a
vertex $v$ is holding $s$ primary tokens just before it is deleted, then $v$
passes $s$ ``secondary'' tokens to each of its neighbors later in $\sigma$ (for
every $s\ge 1$).  Finally, if some $v\in R_3$ has 3 neighbors in $S$ later in
$\sigma$, then $v$ passes a ``tertiary'' token to each of these neighbors.
Again, let $\tok(v)$\aside{$\tok(v)$} (resp.~$\prim(v)$\aaside{$\prim(v)$}{4mm}) denote the number of
tokens (resp.~primary tokens) that each vertex $v$ holds immediately before it 
is deleted (note that $\tok(v)$ now counts \emph{tertiary} tokens, as well as
primary and secondary tokens).  Our next claim is nearly identical to one in the
previous section.

\begin{clm}
\label{clmB}
If $v\in S$ and $w_1,w_2,w_3$ are the neighbors of $v$ later in $\sigma$, if they
exist, then
\begin{equation}
1+\tok(v)+D+\prim(w_1)+\prim(w_2)+\prim(w_3)+12\ge |S|\ge
5D/2+60.\tag{$\star\star$}
\label{key-ineq2}
\end{equation}
\end{clm}

\begin{clmproof}
Fix $v\in S$.  
Recall that $v$ must be adjacent in $G^2$ to every
vertex in $S$.  Each neighbor $w$ of $v$, with $w\notin R_3$, that precedes $v$
in $\sigma$ is the source of at most one such adjacency in $G^2$ to a
vertex $x$ from which $v$ receives no token (neither primary, nor secondary,
nor tertiary); if $w\in R_3$, then it can create two adjacencies for $v$ in
$G^2$, but one of these is accounted for by a tertiary token sent to $v$.
So in total $v$ has at most $D$ such adjacencies \mbox{(that send no tokens
to $v$).}  

Each other adjacency in $G^2$ must
be accounted for either (a) by a token received by $v$ or (b) by a primary token
that has been or will be received by $w_1$ or $w_2$ or $w_3$ or (c) by being an
adjacency to $w_1$ or $w_2$ or $w_3$,
or to a neighbor of some $w_i$ that comes later than $w_i$ in $\sigma$.  At most
$\tok(v)$ vertices of $S$ are handled by (a), and at most $3+3^2=12$ vertices of $S$ are
handled by (c).  The number handled by (b) is at most
$\prim(w_1)+\prim(w_2)+\prim(w_3)$.
This implies~\eqref{key-ineq2}.
\end{clmproof}
\smallskip

Similarly to what we did in Section~\ref{sec3}, we let $\Emph{\Big} := \{v\in V(G):
\prim(v) > \frac18D\}$, we let $\Emph{\Basic}:= \{v\in S: \tok(v) < \frac14D\}$
and we let $\EmphE{\NonBasic}{4mm}:= S\setminus \Basic$.  
If $D\le 2$, then $g(D)\le 5$, and we are done; so instead assume $D\ge 3$.
Note that $|\Big| \le 3|S|/(D/8) \le 3(3D+1)/(D/8)= 72+24/D\le 80$; 
the first inequality here uses Theorem~A.
The total number of primary and secondary tokens is at most
$(3+3^2)|S|$, and the total number of tertiary tokens is at most $3|R_3|<6|S| \le
18D+3$.  So the total number of tokens is, by Claim~\ref{clmA}, at most
$12|S|+3|R_3|<18|S|\le 54D+3$. 
Thus, $|\NonBasic| \le (54D+3)/(D/4) = 216+12/D\le 220$.

\begin{clm}
Every vertex $v\in \Basic$ has at least two neighbors in $\Big$ (that are later in
$\sigma$).
\label{clmC}
\end{clm}
\begin{clmproof}
If $v\in\Basic$, then $\tok(v)<\frac14D$, so~\eqref{key-ineq2} implies
$\prim(w_1)+\prim(w_2)+\prim(w_3) \ge \frac54D$.  By symmetry, assume 
$\prim(w_1)\ge \prim(w_2)\ge \prim(w_3)$.  By Pigeonhole, $\prim(w_1) > \frac18D$, so
$w_1\in \Big$.  Clearly, $\prim(w_1)\le D$.  Thus, $\prim(w_2)+\prim(w_3) \ge
\frac14D$.  Again, by Pigeonhole, $\prim(w_2)\ge \frac18D$, so also
$w_2\in\Big$.
\end{clmproof}

Let $R:=\{v\notin \Big:|N(v)\cap (\Big\cup\Basic)|\ge 3\}$ and $S^+:=\Basic\cup
\Big$.\aside{$R$, $S^+$}

\begin{clm}
We have $\sum_{v\in R}(d_{S^+}(v)-2)<2|\Big|$.
\label{clmD}
\end{clm}
\begin{clmproof}
Let $S^-:=\Basic\setminus(R\cup\Big)$ and $G_2:=G[\Basic\cup R\cup\Big]$.
Note that $\sum_{v\in V(G_2)}d_{G_2}(v) \ge 4|S^-|+2\sum_{v\in R}d_{S^+}(v)$, by
Claim~\ref{clmC}.
Thus, $4>\mad(G_2) \ge (4|S^-|+2\sum_{v\in
R}d_{S^+}(v))/(|S^-|+|R|+|\Big|)$.  So $4|S^-|+2\sum_{v\in
R}d_{S^+}(v)<4(|S^-|+|R|+|\Big|)$, which proves the claim.
\end{clmproof}

Let \Emph{$S'$}$:=\Basic\setminus\Big$.  
(Unlike in Section~\ref{sec3}, here possibly $\Basic\cap\Big\ne\emptyset$.)
\smallskip

We form $G'$, $S'$, $\sigma'$ from $G$, $S$, $\sigma$, as follows.
\smallskip

\begin{enumerate}
\item[(1)] Move $\Big$ to the end of the order, and delete all edges with neither endpoint in $S'$.
\item[(2)] For each vertex $v\notin S'\cup\Big$, delete all but two edges
incident to $v$, and move $v$ to the start of the order; remove all endpoints
of those deleted edges from $S'$.  
\item[(3)] Now consider $v\in S'$; by (2), every
neighbor of $v$ later in the order is in $S'\cup \Big$.  If $v$ has at most two
neighbors later in the order, then do nothing.  Otherwise, delete all but two
edges from $v$ to neighbors later in the order; in particular, delete edges from
$v$ to two big neighbors.  Remove from $S'$ vertex $v$ and any endpoints of
deleted edges that were in $S'$.  
\end{enumerate}

Denote by $G'$ the graph resulting from the process above.
The resulting order $\sigma'$ is a 2-degeneracy order for $G'$,
and $S'$ induces a clique in $G'^2$.  All that remains is to show that $|S'|\ge
|S|-|\NonBasic|-3|\Big| \ge |S|-220-3\times 80 = |S|-460$.

Before (1) above, we have $|S'|\ge |\Basic|-|\Big| = |S|-|\NonBasic|-|\Big|$.
It is straightforward to check that for each vertex $v$ considered in (2) and (3),
all its neighbors later in the order are in $S^+$.
Further, the decrease in $|S'|$
when we process $v$ is at most $\max\{0,d_{S^+}(v)-2\}$.  Thus, the 
total decrease in 
$|S'|$ from (2) and (3)
is at most $\sum_{v\in V(G)\setminus\Big}
\max\{0,d_{S^+}(v)-2\} =\sum_{v\in R}(d_{S^+}(v)-2)<2|\Big|$, by
Claim~\ref{clmD}.  So $|S'|\ge |S|-|\NonBasic|-3|\Big|\ge |S|-460$. 
\end{proof}

\section{Open Questions}

In this paper we have determined, among graphs with maximum degree at most $D$, the
maximum order of a clique in $G^2$, up to an additive constant, (a) when $G$ is
2-degenerate and (b) when $\mad(G)<2\times 2$.  
Recall that $f(D)$ and $g(D)$ are the largest possible sizes of a clique in $G^2$ when
$G$ has maximum degree at most $D$ and satisfies (a) and (b), respectively.
It is natural to ask when $g(D)=f(D)$.

\begin{ques}
For which positive integers $D$ does $g(D)=f(D)$?
\end{ques}

We can ask analogous questions for larger values of degeneracy
or maximum average degree.

\begin{ques}
For each positive integer $k$, what is the minimum value
$\alpha_k$\aside{$\alpha_k$, $c_k$} such that there exists a constant $c_k$
such that if $G$ is $k$-degenerate with maximum degree at most $D$, then
$\omega(G^2)\le \alpha_kD+c_k$?
\end{ques}

Recall that if $G$ is $k$-degenerate with $\Delta(G)\le D$, then $G^2$ has
degeneracy at most $(2k-1)D-k^2$; this is witnessed by any order witnessing
that $G$ is $k$-degenerate.  Thus, $\alpha_k\le 2k-1$.
By extending the ideas in Example~\ref{example2}, we can also construct such
graphs $G$ for which the degeneracy of $G^2$ is equal to $(2k-1)D-k^2$; see
Example~\ref{example3} below.
However, we believe that $\alpha_k<2k-1$.  In fact, it is interesting to ask
about $\lim_{k\to\infty}(2k-1)-\alpha_k$ and $\lim_{k\to\infty}(2k-1)/\alpha_k$.
We suspect the first limit is infinite, but do not have a conjectured value for
the second limit.

Now we consider the larger class of graphs with $\mad(G)<2k$.

\begin{ques}
For each positive integer $k$, what is the minimum value
$\beta_k$\aside{$\beta_k$, $d_k$} such that there exists a constant $d_k$ such
that if $\mad(G)<2k$ and $G$ has maximum degree at most $D$, then
$\omega(G^2)\le \beta_kD+d_k$?
\end{ques}

If $\mad(G)<2k$, then $G$ has degeneracy at most $2k-1$.  Thus, by the argument
above, $\beta_k\le 2(2k-1)-1=4k-3$.  This was significantly improved by
Kierstead et al.~\cite{KYY} who showed that if $\Delta(G)\le D$ and
$2(k-1)<\mad(G)\le 2k$, then $G^2$ has degeneracy at most $(2k-1)D+2k$.
Thus, $\beta_k\le 2k-1$.
In this paper, we proved $\alpha_2=\beta_2=5/2$.  
We believe this equality persists for larger values of $k$, but in general
we have no conjecture for the precise value of $\alpha_k$ (and $\beta_k$).

\begin{conj}
For all $k\ge 2$, we have $\beta_k=\alpha_k$.
\end{conj}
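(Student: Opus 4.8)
The plan is to reduce the conjecture to the single additive inequality $g_k(D)\le f_k(D)+c_k$ for a constant $c_k$ depending only on $k$, where (generalizing the notation of Sections~\ref{sec3} and~\ref{sec4}) $f_k(D)$ and $g_k(D)$ denote the maximum order of a clique in $G^2$ over all graphs $G$ with $\Delta(G)\le D$ that are, respectively, $k$-degenerate or satisfy $\mad(G)<2k$. One inequality is free: every $k$-degenerate graph has $\mad<2k$, so $f_k(D)\le g_k(D)$ and hence $\alpha_k\le\beta_k$. Since $\alpha_k$ and $\beta_k$ are the least admissible \emph{linear} coefficients, a bound $g_k(D)-f_k(D)=O(1)$ (uniformly in $D$) forces $\beta_k\le\alpha_k$, giving equality. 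Thus the entire conjecture amounts to generalizing Theorem~\ref{thm4} from $k=2$ to arbitrary $k$; crucially, this route never needs the common value $\alpha_k$ itself, which remains open for $k\ge 3$.

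To carry this out I would mirror the proof of Theorem~\ref{thm4}. Fix $G$ with $\mad(G)<2k$ and $\Delta(G)\le D$ carrying a maximum clique $S$ in $G^2$, chosen to minimize $|V(G)|+|E(G)|$, so that $V(G)\setminus S$ is independent and every vertex has a neighbor in $S$. The Kierstead--Yang--Yu bound~\cite{KYY} states that $G^2$ has degeneracy at most $(2k-1)D+2k$; this both exhibits a $(2k-1)$-degeneracy order $\sigma$ and bounds $|S|\le(2k-1)D+2k+1=O(D)$. Running the token-passing process along $\sigma$ (primary tokens from each $v\in S$ to its later neighbors, secondary tokens proportional to the primary tokens held), I would add higher-order tokens to correct the new phenomenon: in the $2$-degenerate case a \emph{preceding} neighbor $w$ of $v$ supplies at most one adjacency $vv'\in E(G^2)$, whereas here such a $w$ has up to $2k-2$ other later neighbors and so supplies up to $2k-2$ adjacencies. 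Emitting tertiary tokens of multiplicity $O(k)$ to absorb all but one adjacency per preceding neighbor yields the generalized key inequality
\[
1+\tok(v)+D+\sum_{i}\prim(w_i)+\left((2k-1)+(2k-1)^2\right)\ge |S|,
\]
the sum running over the at most $2k-1$ later neighbors $w_i$ of $v$.

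With thresholds scaled to $k$ -- say $\Big:=\{v:\prim(v)>D/(4k)\}$ and $\Basic:=\{v\in S:\tok(v)<D/4\}$ -- the total number of tokens is $O(k^2\lvert S\rvert)=O(k^2D)$, so $|\Big|$ and $|\NonBasic|$ are each bounded by a constant depending only on $k$. The generalized key inequality then shows, by pigeonhole, that every vertex of $\Basic$ has at least $k$ big later neighbors, the analogue of Claim~\ref{clmC}. The heart of the argument is the counting step playing the role of Claim~\ref{clmD}: setting $S^+:=\Basic\cup\Big$ and $R:=\{v\notin\Big:d_{S^+}(v)\ge k+1\}$, I would apply $\mad<2k$ to the subgraph induced by $\Basic\cup R\cup\Big$ to bound $\sum_{v\in R}(d_{S^+}(v)-k)=O(|\Big|)=O(1)$. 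The surgery then proceeds as in Section~\ref{sec4}: move $\Big$ to the end of $\sigma$, move the $O(1)$ non-big high-$S^+$-degree vertices to the front, delete all edges with neither endpoint in the surviving set, and discard from $S'$ precisely those vertices with more than $k$ later neighbors. This leaves a genuinely $k$-degenerate graph $G'$ with $\Delta(G')\le D$ on which $S'$ is a clique in $(G')^2$ and $\lvert S\setminus S'\rvert=O(1)$, so that $\lvert S\rvert\le f_k(D)+c_k$ and $\beta_k=\alpha_k$.

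The main obstacle will be calibrating this discharging uniformly in $k$: the three quantities -- the ``big'' threshold $D/(4k)$, the demand that each basic vertex have $k$ big later neighbors, and the $\mad$-based absorption of each $R$-vertex's surplus $S^+$-degree -- must remain simultaneously consistent, and since the number of token types grows with $k$, the $\left((2k-1)+(2k-1)^2\right)$ term must be controlled carefully enough that $c_k$ stays finite for each fixed $k$. A secondary point is verifying, exactly as in Claims~\ref{clm2} and~\ref{clm4}, that every deleted edge either lies entirely outside the final $S'$ or is accompanied by removing one of its endpoints from $S'$, so that $S'$ genuinely remains a clique in $(G')^2$ while $G'$ stays $k$-degenerate. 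I emphasize that even a complete success here only establishes $\beta_k=\alpha_k$: determining the common value would additionally require a $k$-degenerate analogue of the multigraph bound in Theorem~\ref{thm1}, which Examples~\ref{example1} and~\ref{example2} suggest is substantially harder.
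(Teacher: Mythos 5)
This statement is a \emph{conjecture} that the paper leaves open: the authors prove $\alpha_2=\beta_2=\frac52$, suggest token passing as a natural tool for larger $k$, and explicitly decline even to conjecture the value of $\alpha_k$; so there is no proof in the paper to compare yours against, and your proposal must be judged on its own. Its outer reduction is sound: $\alpha_k\le\beta_k$ follows from $f_k(D)\le g_k(D)$, and a uniform bound $g_k(D)\le f_k(D)+c_k$ would indeed give $\beta_k\le\alpha_k$, so the conjecture does reduce to a $k$-analogue of Theorem~\ref{thm4}. But the two inner steps of your sketch both fail for $k\ge 3$, at exactly the points where the $k=2$ proof uses $\mad(G)<4$ in a way that does not generalize.

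The first and main gap is your assertion that the total number of tokens is $O(k^2|S|)=O(k^2D)$. Primary and secondary tokens are fine, but the higher-order tokens are not. In Section~\ref{sec4}, tertiary tokens are emitted only by vertices of $R_3$, and Claim~\ref{clmA} bounds $|R_3|<2|S|$ precisely because a vertex with $3$ neighbors in $S$ has degree exceeding $\frac12\mad(G)$; it is a coincidence of $k=2$ that ``at least $3$ neighbors in $S$'' is the same as ``at least $k+1$ neighbors in $S$''. For $\mad(G)<2k$ with $k\ge 3$, the computation $2j|R_j|<2k(|S|+|R_j|)$ bounds only $|R_{k+1}|$ and says nothing for $3\le j\le k$. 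Such vertices really can proliferate: a bipartite graph in which every vertex on one side has exactly $3$ neighbors in $S$ has $\mad<6$ no matter how large that side is, so with $|S|=\Theta(D)$ there can be $\Theta(D|S|)=\Theta(D^2)$ vertices with exactly $3$ neighbors in $S$, consistent with $\Delta\le D$ and $\mad(G)<2k$. Your scheme makes every preceding neighbor $w$ with $j_w\ge 3$ later $S$-neighbors emit tokens covering all but one of its adjacencies, so the emitters include exactly this unbounded set; the total token count can then be $\Theta(D^2)$, whence $|\NonBasic|\le(\text{total tokens})/(D/4)$ can be $\Theta(D)$ rather than $O_k(1)$, and the surgery loses $\Theta(D)$ vertices of $S$ instead of an additive constant. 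Nor can you dodge this by letting only $R_{k+1}$ emit (the set the mad bound does control): then each preceding neighbor with $3\le j_w\le k$ later $S$-neighbors leaves up to $k-1$ adjacencies uncovered, and the $D$ term in your key inequality inflates to $(k-1)D$.

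The second gap is the pigeonhole yielding the analogue of Claim~\ref{clmC}. The Claim~\ref{clmD} analogue genuinely needs each basic vertex to have at least $k$ big later neighbors (with only $2$, the term $2k|S^-|$ degrades to $4|S^-|$ and an error term $(k-2)|S^-|=\Theta(|S|)$ destroys the count). With your thresholds and budget $D$, forcing $k$ big later neighbors requires $|S|>(k+\tfrac12)D+O(k^2)$; with the inflated budget $(k-1)D$ it requires $|S|>(2k-\tfrac32)D+O(k^2)$. Following the paper's template, such an assumption is only legitimate if the complementary case is handled by a construction, i.e.\ if $f_k(D)\ge (k+\tfrac12)D-O(1)$. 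That lower bound is itself open for $k\ge 3$: the obvious generalization of Example~\ref{example1} (replace $K_5$ by $K_{2k+1}$) fails because the pendant gadget vertices alone force the $S$-vertices to have degree $2+\binom{2k-1}{2}\frac{D}{2k}>D$ once $k\ge 3$. So the proposal is a reasonable research plan, but its token-counting step and its pigeonhole step both fail as stated; these appear to be precisely the obstructions that make this statement a conjecture rather than a theorem.
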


\noindent
In any future efforts to improve the upper bounds on $\alpha_k$ and $\beta_k$, it
would be natural to try a token passing scheme, similar to what we used in
Sections~\ref{sec3} and~\ref{sec4}.

This paper was motivated by a question of Hocquard, Kim, and Pierron: Given a
positive integer $D$, what is the maximum value of $\chi(G^2)$ over all graphs
$G$ with maximum degree $D$ such that (a) $G$ is 2-degenerate or (b)
$\mad(G)<4$.  It is natural to generalize these questions. 

\begin{ques}
\label{HKM-gen-ques}
Fix positive integers $D$ and $k$.  (a) What is the maximum value of $\chi(G^2)$
over all graphs $G$ with maximum degree $D$ that are $k$-degenerate?
(b) What is the maximum value of $\chi(G^2)$ over all graphs $G$ with maximum
degree $D$ such that $\mad(G)<2k$?
\end{ques}

In an effort to attack Question~\ref{HKM-gen-ques}, it would be natural to try
to improve the degeneracy bound for $G^2$,
mentioned above, proved by Kierstead et al.
We end this short section by observing that 
this bound is nearly the best possible.  Essentially,
we generalize Example~\ref{example2} to $k$-degenerate graphs for each larger
$k$.  (Recall that each $k$-degenerate graph $G$ has $\mad(G)<2k$.)
For the proof, we will use the following well-known result of Hajnal and
Szemer\'{e}di~\cite{HS}.

\begin{thmHS}[\cite{HS}]
If $G$ is a graph with $|V(G)|=(k+1)r$ and $\Delta(G)\le k$, for positive
integers $r$ and $k$, then $G$ has a proper coloring with $k+1$ color classes
each~of~size~$r$.
\end{thmHS}

\begin{example}
\label{example3}
For all integers $k$ and $D$ with $D\ge k\ge 2$, we can construct a graph
$H_{D,k}$ that is $k$-degenerate with maximum degree $D$ but such that
$H_{D,k}^2$ is not $((2k-1)D-k^2-1)$-degenerate. 
\smallskip

We essentially generalize Example~\ref{example2}.  We want to partition some
$S\subseteq V(G)$
into parts of size $D$ and repeat this step $k$ times, with no two vertices in
the same part in more than one step.  Afterward, we want to partition $S$
into parts of size $k$, repeated $D-k$ times, such that each pair of vertices
appears in a common part (over all $k+(D-k)$ times) at most once.  In fact, all
of this can be done by repeated application of the Hajnal--Szemer\'{e}di
Theorem.

Fix a set $S$ s.t. $|S|=2kD^2$.
We will build a graph $G$ with $S\subseteq V(G)$ and also build an auxiliary
graph $J$ with $V(J)=S$, which will aid in the construction of $G$.
(At the end, we will let $H_{D,k}:=G$.)
Initially, let $V(G)=V(J)=S$ and $E(G)=E(J)=\emptyset$.  Trivially, we can partition $V(J)$ into
$|S|/D$ independent sets, each of size $D$.  For each part $P_i$ in this partition,
add to $G$ a vertex $w_i$ adjacent to precisely the vertices in $P_i$.
Moreover, add to $E(J)$ the edge $vv'$ for each pair $v,v'$ in a common part. 
Now we again partition $V(J)$ into $|S|/D$ independent sets, each of size $D$;
this time we use Hajnal--Szemer\'{e}di.  We repeat this process, partitioning
$V(J)$ a total of $k$ times, each time into $|S|/D$ parts, and each inducing no
edges.  After each partition, we add the prescribed edges to $E(J)$ and $E(G)$.
This ensures that each pair of vertices in $S$ appear in a common part in at
most one partition.  In a second phase we partition $|S|$ into
parts of size $k$, and we do this $D-k$ times.  After the final partition, the
degree of each vertex in $J$ is exactly $k(D-1)+(D-k)(k-1)\le D(2k-1)$.
Thus, $H^2_{D,k}[S]$ has minimum degree $k(D-1)+(D-k)(k-1)$.
We do not really care about the number of color classes, just that the size of
each is $D$ (in the first phase) or $k$ (in the second phase).  
Although we required above that $|S|=2kD^2$, we note that the same construction works
whenever $|S|\ge 2kD^2$ and $kD\big| |S|$.

Let $H_{D,k}:=G$.  We verify, as follows, that $H_{D,k}$ is $k$-degenerate.
Each vertex $w_i$ arising from a part of size $k$ has degree exactly $k$.  So
all of these $w_i$ can be deleted first.  Next, we can delete all vertices of $S$,
since each of them has exactly $k$ remaining neighbors.  Finally, we can delete
all vertices $w_i$ arising from parts of size $D$ (which form an independent set).
\exampleEnd
\end{example}


\end{document}